\documentclass[11pt,leqno]{amsart}
\topmargin -1.2cm \evensidemargin 0cm \oddsidemargin 0cm \textwidth 16cm \textheight 22cm
\usepackage{amscd}
\usepackage{color}
\usepackage[symbol]{footmisc}
\usepackage{amssymb}
\usepackage{amsfonts}
\usepackage{latexsym}
\usepackage{verbatim}

\newcommand{\R}{\mathbb{R}}
\newcommand{\C}{\mathbb{C}}
\renewcommand{\H}{\mathbb{H}}
\newcommand{\Ric}{\mathrm{Ric}}

 %definisce il differenziale.
\renewcommand{\epsilon}{\varepsilon}
\renewcommand{\theta}{\vartheta}
\renewcommand{\phi}{\varphi}
\renewcommand{\Re}{\mathrm{Re}}
\renewcommand{\Im}{\mathrm{Im}}
\DeclareMathOperator*{\osc}{osc}

\theoremstyle{plain}
\newtheorem{teor}{Theorem}[section]
\newtheorem{prop}[teor]{Proposition}
\newtheorem{lem}[teor]{Lemma}

\newtheorem{conj}[teor]{Conjecture}
\newtheorem{oss}[teor]{Remark}

\theoremstyle{definition}

\title{The parabolic quaternionic Calabi-Yau equation on hyperk\"ahler manifolds}
\makeatletter
\@namedef{subjclassname@2020}{%
 \textup{2020} Mathematics Subject Classification}
\makeatother
\begin{document}
\thanks{This work was supported by GNSAGA of INdAM}
\subjclass[2020]{53C26, 35K96, 53E30}

\address{(Lucio Bedulli) Dipartimento di Ingegneria e Scienze dell'Informazione e Matematica, Universit\`a dell'Aquila,
via Vetoio, 67100 L'Aquila, Italy}
\email{lucio.bedulli@univaq.it}

\address{(Giovanni Gentili) Dipartimento di Matematica e Informatica ``Ulisse Dini'', Universit\`{a} degli Studi di Firenze,
viale Morgagni 67/a, 50134 Firenze, Italy}
\email{giovanni.gentili@unifi.it}

\address{(Luigi Vezzoni) Dipartimento di Matematica G. Peano \\ Universit\`a di Torino\\
Via Carlo Alberto 10\\
10123 Torino\\ Italy}
\email{luigi.vezzoni@unito.it}

\author{Lucio Bedulli, Giovanni Gentili and Luigi Vezzoni}

\date{\today}

\maketitle

\begin{abstract}
%Following the approach of Dinew-Sroka \cite{Dinew-Sroka} for proving the conjecture of Alesker and Verbitsky \cite{Alesker-Verbitsky (2010)}, we show that the parabolic quaternionic Monge-Amp\`ere equation on a compact hyperk\"ahler manifold has always a long-time solution which once normalized converges smoothly to a solution of the quaternionic Monge-Amp\`ere equation.  
We show that the parabolic quaternionic Monge-Amp\`ere equation on a compact hyperk\"ahler manifold has always a long-time solution which once normalized converges smoothly to a solution of the quaternionic Monge-Amp\`ere equation. This is the same setting in which Dinew and Sroka \cite{Dinew-Sroka} prove the conjecture of Alesker and Verbitsky \cite{Alesker-Verbitsky (2010)}. We also introduce an analogue of the Chern-Ricci flow in hyperhermitian manifolds. 
\end{abstract}

\section{Introduction}
A hypercomplex manifold is a real $4n$-dimensional smooth manifold equipped with three complex structures $I,J,K$ satisfying the quaternionic-type relations $IJ=-JI=K$. A hyperhermitian metric $g$ is determined either by a form $\Omega$ of type $(2,0)$ with respect to $I$ or by a positive real form $\omega$ of type $(1,1)$ with respect to $I$ such that $J\omega=-\omega$. A hyperhermitian metric is called HKT (hyperk\"ahler with torsion) if $\partial \Omega=0$ (the operators $\partial$ and $\bar\partial$ will be always taken with respect to the complex structure $I$ throughout the paper). HKT metrics were first introduced in \cite{Howe-Papadopoulos (2000)} with motivations coming from theoretical physics and further studied from the purely geometric point of view (see e.g. \cite{Alesker (2013),Alesker-Shelukhin (2017),Alesker-Verbitsky (2010),BDV,BGV,Dinew-Sroka,Dotti-Fino (2000),Dotti-Fino (2002),GV,GLV,Grantcharov-Poon (2000),Sroka,Sroka22,Verbitsky (2002),Verbitsky (2009),Zhang} and the references therein). 

In hyperhermitian geometry the role of plurisubharmonic functions is usually replaced by smooth real functions $\varphi$ satisfying $\Omega+\partial \partial_J\varphi>0$ ({\em quaternionic plurisubharmonic} functions), where the positivity is in the sense of (2,0)-forms (see the preliminaries) and $\partial_J=J^{-1}\bar \partial J$ is the {\em twisted $\bar\partial$ operator}. Equivalently a function $\varphi$ is quaternionic plurisubharmonic if $\omega + \tfrac{i}{2}( \partial \bar \partial \phi - J\partial \bar \partial \phi)$ is positive as a $(1,1)$-form. 

In \cite{Alesker-Verbitsky (2010)} Alesker and Verbitsky introduced an analogue of the complex Monge-Amp\`ere equation on hyperhermitian manifolds by considering for a given smooth function $f$  the {\em quaternionic  Monge-Amp\`ere equation} 
\begin{equation}\label{QMA}
(\Omega+\partial\partial_J\varphi)^n={\rm e}^{f+b}\,\Omega^n\,,\qquad \Omega+\partial\partial_J\varphi>0\,, \qquad  \sup_M \phi =0\,,
\end{equation}
for a real-valued function $\varphi$ and a constant $b$. The equation can be reformulated in terms of real $2$-forms as 
\begin{equation}\label{QMA2}
(\omega + \tfrac{i}{2}( \partial \bar \partial \phi - J\partial \bar \partial \phi))^{2n}={\rm e}^{2(f+b)}\,\omega^{2n}\,,\qquad \omega + \tfrac{i}{2}( \partial \bar \partial \phi -J\partial \bar \partial \phi)>0\,, \qquad  \sup_M \phi =0\,.
\end{equation}
In \cite{Alesker-Verbitsky (2010)} it is conjectured that the equation is always solvable on compact HKT manifolds at least under the extra assumption that the canonical bundle of $(M,I)$ is holomorphically trivial. 

\medskip
Following the strategy of Yau for proving the Calabi conjecture \cite{Yau}, the natural approach for studying the quaternionic Monge-Amp\`ere equation is via the continuity method and the hard part in this direction is, as usual, the proof of a priori estimates. Fortunately some important results have been established in this direction. The $C^0$ estimate is now proved in the most general case. A first proof of the $C^0$ estimate was given in \cite{Alesker-Verbitsky (2010)} under the assumptions on $g$ to be HKT and on the canonical bundle of $(M,I)$ to be holomorphically trivial. Furthermore the $C^0$ estimate was improved in \cite{Alesker-Shelukhin (2017)} by removing the condition on the canonical bundle and  recently also the HKT assumption was removed by Sroka in \cite{Sroka22} applying a recent method of Guo, Phong  and Tong \cite{GP22a,GP22b,GPT21}. The higher order estimates have been established so far only under extra assumptions. In \cite{Alesker (2013)} Alesker confirmed the conjecture on compact flat hyperk\"ahler manifolds. More recently Dinew and Sroka drastically improved Alesker's result by proving the conjecture also for non-flat hyperk\"ahler manifolds \cite{Dinew-Sroka}.  Other partial confirmations to the conjecture can be found in \cite{GV,GV2}. 

%Here $ c_1^{\mathrm{BC}}(M,I)=0 $ means that the Chern-Ricci form of any Hermitian metric on $ (M,I) $ is $ i\partial \bar \partial $-exact.
%

\medskip 
In the present paper we first observe that in the most general case an upper bound of the Laplacian of the solution to \eqref{QMA} combined with the $C^0$ estimate implies all the other a priori estimates (Theorem \ref{C2alphaelliptic}). In particular this allows us to simplify part of the proof of the Dinew-Sroka Theorem in \cite{Dinew-Sroka} (see Remark \ref{semplification}). We further prove an analogous result for the quaternionic parabolic Monge-Amp\`ere equation   
\begin{equation}\label{pMA}
\dot \phi= 2\log \frac{(\Omega+\partial\partial_J\varphi)^n}{\Omega^n}-2f\,,\qquad \Omega+\partial\partial_J\varphi>0\,, \qquad \phi(\cdot,0)=0\,,
\end{equation}
introduced in \cite{BGV,Zhang} (see Theorem \ref{C2alpha}). This leads to the following theorem which we prove in section \ref{secproof}

\begin{teor}\label{main}
Let $(M,I,J,K,g)$ be a compact hyperhermitian manifold. Assume that $(I,J,K)$ admits a compatible hyperk\"ahler metric $\hat g$. Then for every $f\in C^{\infty}(M)$ equation \eqref{pMA} has a unique long-time solution $\varphi(t)$, $t\in [0,\infty)$. The normalization
$$
\tilde \varphi:=\varphi-\frac{1}{\int_M \Omega^n \wedge \bar\Omega^n}\int_M\varphi \,\Omega^n \wedge \bar\Omega^n
$$
converges to a solution of \eqref{QMA} with a suitable choice of $b$.
\end{teor}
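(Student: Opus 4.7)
The plan is to combine Theorem \ref{C2alpha} with parabolic maximum-principle arguments, exploiting the compatible hyperk\"ahler metric $\hat g$ throughout. Short-time existence of \eqref{pMA} follows from standard theory of fully nonlinear parabolic equations, as the linearisation at $\varphi=0$ is uniformly elliptic. To extend the solution to $[0,\infty)$, by Theorem \ref{C2alpha} it suffices to produce, on every finite interval $[0,T]$, a uniform $C^0$ bound on $\varphi$ and an upper bound on $\mathrm{tr}_{\hat g}(\Omega+\partial\partial_J\varphi)$. Convergence of the normalised flow will then follow from exponential decay in $t$ of the oscillation of $\dot\varphi$.

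Differentiating \eqref{pMA} in time, $\dot\varphi$ satisfies a linear parabolic equation with principal part $2n\,\partial\partial_J(\cdot)\wedge\Omega_\varphi^{n-1}/\Omega_\varphi^n$, and the parabolic maximum principle yields the uniform bound $\|\dot\varphi(\cdot,t)\|_\infty\le \|\dot\varphi(\cdot,0)\|_\infty=2\|f\|_\infty$. Frozen at $t$, the equation thus reads $(\Omega+\partial\partial_J\varphi)^n=e^F\Omega^n$ with $\|F(\cdot,t)\|_\infty\le C$, and Sroka's quaternionic $C^0$ estimate \cite{Sroka22} provides a uniform oscillation bound $\osc_M\varphi(\cdot,t)\le C$. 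Combined with $|\dot\varphi|\le C$, which controls the evolution of the mean of $\varphi$, this gives $\|\varphi\|_{L^\infty(M\times[0,T])}\le C(1+T)$, enough for the continuation argument once the Laplacian upper bound is in hand.

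The main technical step is the upper bound on the trace $\mathrm{tr}_{\hat\Omega}\Omega_\varphi$, uniform in $t\in[0,\infty)$. I would apply the parabolic maximum principle to
\[
H(x,t)=\log\mathrm{tr}_{\hat\Omega}\Omega_\varphi - A\bigl(\varphi-\bar\varphi(t)\bigr),
\]
with $\bar\varphi(t)$ the mean of $\varphi$ at time $t$ and $A>0$ a large constant. The hyperk\"ahler hypothesis $\nabla\hat\Omega=0$ (in particular $\partial\hat\Omega=0$) kills all torsion-type error terms and reduces the spatial part of the computation to the Bochner-type identity exploited by Dinew and Sroka \cite{Dinew-Sroka} in the elliptic case, while the parabolic term has the favourable sign thanks to the $L^\infty$ bound on $\dot\varphi$. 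This is the main obstacle of the proof, and the only place where the hyperk\"ahler assumption is essential; once achieved, Theorem \ref{C2alpha} delivers uniform $C^{k,\alpha}$ estimates and hence long-time existence.

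For the convergence of the normalised flow one has uniform $C^k$ bounds on $\tilde\varphi$, and $\Omega_\varphi$ is uniformly comparable to $\hat\Omega$. Mimicking Cao's proof of the Calabi conjecture via the K\"ahler-Ricci flow, I would test the linear equation for $\dot\varphi$ against $\dot\varphi$ minus its $\hat\Omega^n\wedge\bar{\hat\Omega}^n$-average and invoke a uniform Poincar\'e inequality to obtain
\[
\frac{d}{dt}\int_M\bigl(\dot\varphi-\overline{\dot\varphi}\bigr)^2\,\hat\Omega^n\wedge\bar{\hat\Omega}^n\le -c\int_M\bigl(\dot\varphi-\overline{\dot\varphi}\bigr)^2\,\hat\Omega^n\wedge\bar{\hat\Omega}^n.
\]
This forces $\dot\varphi\to 2b$ uniformly for some constant $b$, and the uniform higher order estimates promote it to smooth convergence of $\tilde\varphi$ to a limit $\tilde\varphi_\infty$ satisfying $(\Omega+\partial\partial_J\tilde\varphi_\infty)^n=e^{f+b}\Omega^n$, i.e.\ a solution of \eqref{QMA} (after the harmless additive renormalisation $\sup\tilde\varphi_\infty=0$). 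Uniqueness modulo additive constants of the elliptic solution, via the comparison principle applied to the difference of two solutions, then ensures that the whole family, and not only subsequences, converges.
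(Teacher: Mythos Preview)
Your overall architecture matches the paper's: short-time existence, the maximum-principle bound on $\dot\varphi$, Sroka's $C^0$ estimate applied at each frozen time, a parabolic trace estimate against the hyperk\"ahler background, and then Theorem~\ref{C2alpha} plus bootstrapping. The paper's Lemma~\ref{C2} uses the test quantity $\mathrm{tr}_{\hat g}g_\varphi-A\varphi$ rather than your $\log\mathrm{tr}_{\hat\Omega}\Omega_\varphi-A(\varphi-\bar\varphi)$, but this is a cosmetic difference; your normalisation by $\bar\varphi(t)$ is in fact the cleaner choice for a bound uniform in $t$.

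The substantive divergence is in the convergence step, and here your proposal has a genuine gap. The linearised equation is $\partial_t\dot\varphi=\Delta_\varphi\dot\varphi$, where $\Delta_\varphi$ is the \emph{Chern} Laplacian of the hyperhermitian metric $g_\varphi$. Since $g_\varphi$ is in general neither K\"ahler nor balanced, $\Delta_\varphi$ is not self-adjoint with respect to any fixed volume form (nor with respect to $\omega_\varphi^{2n}$): integrating $\int(\dot\varphi-\overline{\dot\varphi})\,\Delta_\varphi\dot\varphi\,\hat\Omega^n\wedge\bar{\hat\Omega}^n$ by parts produces a first-order term of the shape $\int(\dot\varphi-\overline{\dot\varphi})\,b^{\bar j}\partial_{\bar j}\dot\varphi$, with $b$ controlled only in $C^0$ by the uniform $C^\infty$ bounds, not small. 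After Cauchy--Schwarz and Poincar\'e this yields
\[
\frac{d}{dt}\bigl\|\dot\varphi-\overline{\dot\varphi}\bigr\|_{L^2}^2 \le (-c\lambda_1+C)\,\bigl\|\dot\varphi-\overline{\dot\varphi}\bigr\|_{L^2}^2,
\]
and there is no reason for $c\lambda_1>C$. So the displayed differential inequality you claim does not follow. (Incidentally, Cao's original argument in the K\"ahler case also goes through a differential Harnack inequality rather than an $L^2$ energy estimate.)

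The paper circumvents this exactly as Gill does for the Hermitian parabolic Monge--Amp\`ere equation: one proves a Li--Yau type Harnack inequality for positive solutions of $\partial_t u=\Delta_\varphi u$, using only that the coefficients of $\Delta_\varphi$ are uniformly bounded in $C^\infty$, and applies it to $u=\dot\varphi-\inf\dot\varphi+1$ (and to $-\dot\varphi$) to force $\osc_M\dot\varphi$ to decay exponentially. This gives $\dot{\tilde\varphi}\to 0$ in $C^0$, hence in $C^\infty$ by interpolation with the higher estimates, and the limit of $\tilde\varphi$ solves \eqref{QMA}. Replacing your energy argument by this Harnack step (quoting \cite{Gill} and \cite{LY}) repairs the proof; the rest of your outline is correct.
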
 

Theorem \ref{main} is the natural generalization of the main theorem in \cite{BGV,Zhang} and the parabolic version of the main result in \cite{Dinew-Sroka}. 

 Given a solution $\varphi(t)$ to \eqref{pMA}, the associated $ (1,1) $-form $\omega(t):=\omega+ \tfrac{i}{2}( \partial \bar \partial \phi(t) - J\partial \bar \partial \phi(t))$ satisfies the evolution equation
\begin{equation}\label{JRicciflow}
\dot\omega(t)=-\frac{1}{2}\left({\rm Ric}(\omega(t))-J {\rm Ric}(\omega(t))-\beta+J\beta\right)
\end{equation}
where $\beta={\rm Ric}(\omega)-2i\partial \bar \partial f$ and ${\rm Ric}$ is the Chern-Ricci form with respect to $I$.  Long-time existence and convergence of \eqref{pMA} would imply the fact that for every representative $ \beta $ of $ c_1^{\rm BC}(M,I)$, there is a unique hyperhermitian metric of the form $ \omega_\phi=\omega+ \tfrac{i}{2}( \partial \bar \partial \phi - J\partial \bar \partial \phi) $ with ${\rm Ric}(\omega_\phi)=\beta$. From this point of view Theorem \ref{main} is in the spirit of Cao's theorem \cite{Cao}.

Equation \eqref{JRicciflow} suggests to consider the geometric flow 
\begin{equation}
\label{J-}
\dot \omega(t)=-\frac{1}{2}\left({\rm Ric}(\omega(t))-J {\rm Ric}(\omega(t))\right)\,,\quad \omega(0)=\omega\,,
\end{equation}
since it preserves the compatibility with the hypercomplex structure and plays the role of the Ricci-flow in K\"ahler geometry and of the Chern-Ricci flow in Hermitian geometry \cite{TW}. We think that a study of flow \eqref{J-} in the same spirit of  \cite{TW} could give new insights in hyperhermitian geometry. In section \ref{sJ-} we collect some preliminary observations on the flow. 

%is equivalent to the fact that for every representative $ \beta $ of $ c_1^{\rm BC}(M,I)$, \eqref{JRicciflow} has a long-time solution $\omega(t)$ which converges smoothly to a hyperhermitian metric $\omega_\infty$ with ${\rm Ric}(\omega_\infty)=\beta$.

\bigskip  
\noindent {\bf Acknowledgements.} We are very grateful to M. Sroka for several fruitful discussions on earlier versions of this paper. Moreover we are grateful to D. H. Phong and M. Garcia-Fernandez for the interest shown in our research. We also thank E. Fusi for interesting conversations and remarks.    

\section{Preliminaries}\label{Preliminaries}
A {\em hypercomplex manifold} $(M,I,J,K)$ is an even-dimensional complex manifold $(M,I)$ equipped with two additional complex structures $(J,K)$ satisfying the quaternionic-type relations $IJ=-JI=K$. A Hermitian metric $g$ on $(M,I,J,K)$ is \emph{hyperhermitian} if it is compatible with each $I,J,K$. 
Any hyperhermitian metric $ g $ induces a corresponding form
\[
\Omega:=\frac12 \left(g(J\cdot,\cdot)+ig(K\cdot,\cdot)\right) = \frac12 \left(\omega_J + i \omega_K\right)\in \Lambda^{2,0}_I
\]
which satisfies the following properties 
\begin{itemize}
\item $ \Omega(J\cdot,J\cdot)=\bar \Omega $ ($\Omega$ is q-real);

\vspace{0.1cm}
\item $ \Omega(Z,J\bar Z)>0 $ for every non-zero $ Z\in T^{1,0}M $ ($\Omega$ is  positive).
\end{itemize}
Conversely, any q-real and positive $ \Omega\in \Lambda_I^{2,0}$ induces a hyperhermitian metric $ g $ via the relation 
\begin{equation}\label{gfromOmega}
g=2\Re (\Omega(\cdot,J\cdot))\,.
\end{equation}
Hence we have a one-to-one correspondence between  $q$-real positive $(2,0)$-forms and hyperhermitian metrics. If $ \Omega $ is $ \partial $-closed we say that  $g$ is \emph{hyperk\"ahler with torsion} (HKT). We further denote by $\omega$ the fundamental form of $(g,I)$. We have the following relation 
\begin{equation}\label{frac}
\frac{\Omega^n\wedge \bar \Omega^n}{(n!)^2}=\frac{\omega^{2n}}{(2n)!}\,,
\end{equation}
(see e.g. \cite[Section 4.3]{SThesis}).

%**********
%
%
%Here is useful to recall that the  {\em Pfaffian} of a skew-symmetric $2n\times 2n$ complex matrix $ M=(m_{ij}) $ is defined by  
%\[
%\mathrm{pf}(M)e^1\wedge \dots \wedge e^{2n}=\frac{1}{n!}\left(\sum_{i<j}m_{ij}e^i\wedge e^j\right)^n\,,
%\]
%where $ (e^i) $ is the standard basis of $ \C^{2n} $. It is known that $ \mathrm{pf}(M)^2=\det(M) $. Such a definition extends naturally to any $ (2,0) $-form $ \alpha\in \Lambda^{2,0}_I(M) $, written in complex local coordinates as $ \alpha= \alpha_{ab}dz^a\wedge dz^b $. The volume ratio $ \frac{\Omega_\phi^n}{\Omega^n} $ can now be rewritten as
%\[
%\log \frac{\Omega_\phi^n}{\Omega^n}=\log \frac{\mathrm{pf}(\Omega_\phi)}{\mathrm{pf}(\Omega)}=\frac{1}{2}\log \frac{\det(\Omega_\phi)}{\det(\Omega)}=\frac{1}{2}\log \frac{\det(g_\phi)}{\det(g)}=\frac{1}{2}\log \frac{\det(\omega_\phi)}{\det(\omega)}\,.
%\]

%\textcolor{red}{C'\`E davvero necessit\`a di richiamare il Paffiano?  Penso che l'unica cosa che veramente serva sia la \eqref{frac} .... Qui effettivamente la \eqref{frac} è sufficiente, però non so come evitare lo Pfaffiano per arrivare alla \eqref{utile}}
%

%*************

Let $\partial$ be the $\partial$-operator with respect to $I$ and $\partial_J:=J^{-1}\bar\partial J\colon \Lambda^{r,0}_I\to \Lambda^{r+1,0}_I$.  Then 
$$
\partial\partial_J=-\partial_J\partial\,,
$$
see \cite{Dinew-Sroka}.
%A smooth real function $\varphi$ on $M$ is called {\em quaternionic plurisubharmonic} if   
%$$
%\Omega_\phi:= \Omega + \partial \partial_J \phi >0\,.
%$$
Every $\Omega_\phi:= \Omega + \partial \partial_J \phi >0$ induces a hyperhermitian metric $g_\varphi$. We further denote by $\omega_\varphi$ the $ (1,1) $-form of $(g_\varphi,I)$.  
 
 \medskip 
The following useful lemma follows from \cite[Remark 4.1]{Sroka22} and \cite[Proposition 2.15]{Dinew-Sroka} but we prove for the reader's convenience

\begin{lem}\label{dedeJ}
For every $\phi\in C^{\infty}(M)$ we have 
\[
\partial\partial_J\phi(X,Y)=-\frac{1}{2}\left( \partial \bar \partial \phi(X,JY)+ \partial \bar \partial \phi(JX,Y) \right)^{2,0}\,. 
\]
Moreover, 
\[
\omega_\phi=\omega + \frac{i \partial \bar \partial \phi - iJ\partial \bar \partial \phi}{2}
\]
and 
$$
\mathrm{tr}_{g_\phi}g=2n-\Delta_{\phi} \phi\,,
$$
where $\Delta_\phi$ is the Chern-Laplacian operator with respect to $ g_\phi $. 
\end{lem}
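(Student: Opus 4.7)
For the first identity I would work in a local $I$-holomorphic frame. Unwinding $\partial_J = J^{-1}\bar\partial J$ on a real function, one sees that $\partial_J\phi$ is a $(1,0)$-form whose value on $X\in T^{1,0}_I$ is, up to sign, the directional derivative $(JX)\phi$; taking $\partial$ and evaluating the resulting $(2,0)$-form on $X, Y \in T^{1,0}_I$ via Cartan's formula $d\alpha(X,Y) = X\alpha(Y) - Y\alpha(X) - \alpha([X,Y])$ (with $[X, Y] \in T^{1,0}_I$ by integrability of $I$) yields an explicit expression. Computing the right-hand side $\partial\bar\partial\phi(X, JY) + \partial\bar\partial\phi(JX, Y)$ by the same Cartan formula, now using $JX, JY \in T^{0,1}_I$, gives a parallel expression; the two are matched by a cancellation driven by the vanishing of the Nijenhuis tensor of $J$, combined with the antisymmetrization implicit in taking the $(2,0)$-part.

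For the second identity, the correspondence \eqref{gfromOmega} $g_\phi(X, Y) = 2\mathrm{Re}(\Omega_\phi(X, JY))$ together with $\omega_\phi = g_\phi(I\cdot, \cdot)$ is linear in $\Omega_\phi$, so $\omega_\phi - \omega = 2\mathrm{Re}(\partial\partial_J\phi(I\cdot, J\cdot))$. Substituting the first identity and simplifying converts the right-hand side into $\tfrac{i}{2}(\partial\bar\partial\phi - J\partial\bar\partial\phi)$, i.e.\ the $J$-anti-invariant part of $i\partial\bar\partial\phi$ (as must be the case since the resulting form is the variation of a hyperhermitian fundamental form, which itself satisfies $J\omega_\phi = -\omega_\phi$).

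For the third identity, apply the Lefschetz contraction $\Lambda_{\omega_\phi}$ to $\omega = \omega_\phi - \eta$ with $\eta := \tfrac{i}{2}(\partial\bar\partial\phi - J\partial\bar\partial\phi)$, obtaining $\mathrm{tr}_{g_\phi}g = 2n - \Lambda_{\omega_\phi}\eta$. The key observation is that, since $\omega_\phi$ is hyperhermitian, $J\omega_\phi = -\omega_\phi$. Because $J$ is an algebra automorphism of $\Lambda^*$ acting as the identity on top forms of the $4n$-manifold, for any $(1,1)$-form $\beta$ one has
\[
\beta\wedge \omega_\phi^{2n-1} \;=\; J\bigl(\beta \wedge \omega_\phi^{2n-1}\bigr) \;=\; J\beta \wedge (J\omega_\phi)^{2n-1} \;=\; -\,J\beta \wedge \omega_\phi^{2n-1},
\]
hence $\Lambda_{\omega_\phi}(J\beta) = -\Lambda_{\omega_\phi}\beta$. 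Applied to $\beta = i\partial\bar\partial\phi$, this yields $\Lambda_{\omega_\phi}\eta = \Lambda_{\omega_\phi}(i\partial\bar\partial\phi) = \Delta_\phi\phi$, and the asserted identity follows.

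The principal difficulty lies in the first step: because $J$ is not $I$-holomorphic, $\partial$ does not commute with the action of $J$ on forms, and the signs produced by $J$ acting on vector fields versus on forms must be reconciled carefully. The integrability of $J$ is precisely what collapses the residual Lie-bracket terms $[X, JY]$, $[JX, Y]$ and $J[X, Y]$ into the claimed combination; once that identity is in hand, the remaining two assertions are essentially formal consequences.
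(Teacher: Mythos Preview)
Your proposal is correct and reaches the same conclusions, but the route for the first identity is genuinely different from the paper's. The paper does not use Cartan's formula or an explicit Nijenhuis argument: instead it fixes $I$-holomorphic coordinates in which the symmetry $\partial_i J_s^{\bar r}=\partial_s J_i^{\bar r}$ holds (a consequence of the integrability of the hypercomplex structure, cf.\ \cite[Remark~2.13]{Dinew-Sroka}), and then computes directly
\[
\partial\partial_J\phi=-\partial J\bar\partial\phi=-\phi_{i\bar r}J^{\bar r}_s\,dz^i\wedge dz^s-\phi_{\bar r}\,\partial_iJ^{\bar r}_s\,dz^i\wedge dz^s=-\phi_{i\bar r}J^{\bar r}_s\,dz^i\wedge dz^s,
\]
the last term dropping out precisely because of that coordinate symmetry. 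Your Cartan-formula approach packages the same integrability input more intrinsically (the Nijenhuis vanishing of $J$ is exactly what underlies the coordinate symmetry), at the cost of having to track several bracket terms; the paper's computation is shorter but relies on the existence of those adapted coordinates as a black box. For the second identity your argument and the paper's coincide. For the third identity the paper is terse---it simply invokes ``since $g_\phi$ is $J$-Hermitian'' to pass from $\mathrm{tr}_{\omega_\phi}\tfrac{1}{2}(i\partial\bar\partial\phi-iJ\partial\bar\partial\phi)$ to $\Delta_\phi\phi$---whereas your algebra-automorphism argument showing $\Lambda_{\omega_\phi}(J\beta)=-\Lambda_{\omega_\phi}\beta$ makes that step explicit and is a nice, self-contained justification.
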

\begin{proof}
The first part of the statement can be easily proved locally by using $ I $-holomorphic coordinates such that $ \partial_i J_s^{\bar r}=\partial_s J_i^{\bar r} $ \cite[Remark 2.13]{Dinew-Sroka}. Indeed, 
\begin{equation*}\label{locddJ}
\begin{aligned}
\partial\partial_J\phi&=-\partial J\bar \partial \phi=
-\partial J \phi_{\bar r} d\bar z^r=-\partial  (\phi_{\bar r} J^{\bar r}_{s} dz^{s})
\\
&=- \phi_{i\bar r}J^{\bar r}_{s} dz^{i}\wedge dz^s- 
\phi_{\bar r}\partial_iJ^{\bar r}_{s} dz^{i}\wedge dz^s
=- \phi_{i\bar r}J^{\bar r}_{s} dz^{i}\wedge dz^s\,;
\end{aligned}
\end{equation*}
hence 
\begin{equation}\label{ddJ}
\partial\partial_J\phi(X,Y)=\frac{1}{2}\left( -\partial \bar \partial \phi(X,JY)+ \partial \bar \partial \phi(Y,JX) \right)^{2,0}
\end{equation}
as required. Moreover, from \eqref{gfromOmega} and \eqref{ddJ} we have
\[
\begin{aligned}
g_\phi(X,Y)=2\Re(\Omega_\phi(X,JY))%=g(X,Y)-\frac{1}{2}\left(i\partial \bar \partial \phi(KX,JY)+i\partial \bar \partial \phi(X,KJY)\right)\\
=g(X,Y)-\frac{1}{2}\left(i\partial \bar \partial \phi(KX,JY)-i\partial \bar \partial \phi(X,IY)\right)
\end{aligned}
\]
and thus
\[
\begin{aligned}
\omega_\phi(X,Y)=g_\phi(IX,Y)%=\omega(X,Y)-\frac{1}{2}\left(i\partial \bar \partial \phi(KIX,JY)-i\partial \bar \partial \phi(IX,IY)\right)\\
=\omega(X,Y)-\frac{1}{2}\left(i\partial \bar \partial \phi(JX,JY)-i\partial \bar \partial \phi(X,Y)\right)\,.
\end{aligned}
\]
Finally, since $ g_\phi $ is $ J $-Hermitian we have
\[
\mathrm{tr}_{g_\phi}g=\mathrm{tr}_{\omega_\phi} \omega=2n- \mathrm{tr}_{\omega_\phi} \frac{i \partial \bar \partial \phi - iJ\partial \bar \partial \phi}{2}=2n-\Delta_{\phi} \phi
\]
as claimed.
\end{proof}
 
The following Lemma essentially follows from \cite{Verbitsky (2009)} 

\begin{lem} \label{lemma3}
If $g$ is HKT, then 
$$
d\Omega^n=\theta\wedge \Omega^{n}
$$
where $\theta=-Id^*\omega$ is the Lee form of $(\omega,I)$. In particular ${\rm Ric}(\omega)=dd^*\omega $ and in the compact case
$$
d \Omega^n=0\iff \omega \mbox{ is balanced } \iff {\rm Ric}(\omega)=0\,.
$$
\end{lem}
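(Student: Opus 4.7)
The plan is to use the HKT hypothesis $\partial\Omega = 0$ to rewrite $d\Omega^n$ as a $(2n,1)$-form, factor $\Omega^n$ locally, and identify the resulting $(0,1)$-form with $\theta^{0,1}$. Since $\partial\Omega = 0$ one has $d\Omega^n = \bar\partial\Omega^n$, a $(2n,1)$-form. In a local $I$-holomorphic chart the positivity of $\Omega$ lets us write $\Omega^n = F\,dz^1\wedge\cdots\wedge dz^{2n}$ with $F$ nowhere zero (explicitly $F = n!\,\mathrm{Pf}(\Omega_{ij})$), so
\[
d\Omega^n = \bar\partial F \wedge dz^1\wedge\cdots\wedge dz^{2n} = (\bar\partial\log F)\wedge\Omega^n,
\]
which shows $d\Omega^n = \alpha\wedge\Omega^n$ for a globally defined $(0,1)$-form $\alpha$ (locally $\alpha = \bar\partial\log F$). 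The pointwise identity \eqref{frac} gives $|F|^2 = c\det(g_{i\bar j})$ for a positive constant $c$. The key step is to identify $\alpha$ with $\theta^{0,1}$: using the defining property $d\omega^{2n-1} = \theta\wedge\omega^{2n-1}$ of the Lee form on a complex manifold of complex dimension $2n$ together with the HKT condition, a direct local calculation---essentially the content of \cite{Verbitsky (2009)}---shows $\alpha = \theta^{0,1}$. Since $\theta^{1,0}\wedge\Omega^n = 0$ for bidegree reasons we conclude $d\Omega^n = \theta\wedge\Omega^n$.

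For the identity $\mathrm{Ric}(\omega) = dd^*\omega$, apply $d$ to the just-proven formula: $0 = d^2\Omega^n = d\theta\wedge\Omega^n$. Decomposing $d\theta$ by bidegree, all components except $\bar\partial\theta^{0,1}$ wedge trivially with $\Omega^n$ (since $\Omega^n$ has bidegree $(2n,0)$), so $\bar\partial\theta^{0,1}\wedge\Omega^n = 0$ and hence $\bar\partial\theta^{0,1} = 0$; by conjugation $\partial\theta^{1,0} = 0$. Using $d^*\omega = I\theta = i\theta^{1,0} - i\theta^{0,1}$ (from $\theta = -Id^*\omega$ and $I^2 = -1$) we compute
\[
dd^*\omega = i\bar\partial\theta^{1,0} - i\partial\theta^{0,1} = -i\partial\bar\partial\log\bar F - i\partial\bar\partial\log F = -i\partial\bar\partial\log\det(g_{i\bar j}) = \mathrm{Ric}(\omega)
\]
by the standard local formula for the Chern-Ricci form and $|F|^2 = c\det(g_{i\bar j})$.

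The compact-case equivalences then follow formally: $d\Omega^n = 0$ iff $\theta\wedge\Omega^n = 0$ iff $\theta^{0,1} = 0$ (as $\Omega^n$ trivializes $K_{(M,I)}$) iff $\theta = 0$ (by reality), which means $d^*\omega = 0$, i.e., $\omega$ is balanced; this immediately gives $\mathrm{Ric}(\omega) = dd^*\omega = 0$. Conversely, on compact $M$, $\mathrm{Ric}(\omega) = 0$ yields $dd^*\omega = 0$, and integration by parts gives $\int_M|d^*\omega|^2 = \int_M\langle\omega, dd^*\omega\rangle = 0$, so $\omega$ is balanced. The hard part of the argument is the identification $\alpha = \theta^{0,1}$ in the first paragraph, which genuinely uses the HKT hypothesis; every other step is a formal manipulation or a bidegree count.
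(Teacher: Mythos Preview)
Your argument is largely sound, but the step you yourself flag as ``the hard part''---the identification $\alpha=\theta^{0,1}$---is not actually carried out: you only assert that ``a direct local calculation \ldots\ shows $\alpha=\theta^{0,1}$'' and cite \cite{Verbitsky (2009)}. Since everything downstream (in particular your computation of $\mathrm{Ric}(\omega)$) rests on this identification, the proof as written is incomplete precisely at its crux. The ingredients you list (the defining relation $d\omega^{2n-1}=\theta\wedge\omega^{2n-1}$ and the identity $|F|^2=c\det(g_{i\bar j})$) do not by themselves pin down $\theta^{0,1}$: they only control $\theta^{0,1}+\overline{\theta^{0,1}}$-type combinations after passing through $\omega^{2n}$, and one genuinely needs the HKT structure to relate $\bar\partial\log F$ to the Lee form.

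The paper's proof handles this step differently and explicitly: it computes $d^*\omega_J=d^*(\Omega+\bar\Omega)$ via the Hodge-star identities $*\Omega=\tfrac{1}{(n-1)!\,n!}\Omega^n\wedge\bar\Omega^{n-1}$ and $*\alpha=-\tfrac{1}{(n-1)!\,n!}J\alpha\wedge\Omega^n\wedge\bar\Omega^{n-1}$, then invokes the HKT identity $\theta=-Jd^*\omega_J$ from \cite{IPap} to conclude $\beta\wedge\Omega^n=\theta\wedge\Omega^n$. If you want to complete your approach, this is essentially the missing calculation.

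Where your write-up \emph{does} differ usefully from the paper is the derivation of $\mathrm{Ric}(\omega)=dd^*\omega$. The paper deduces this from Bismut--Ricci flatness of HKT metrics together with formula~(2.7) of \cite{AlexIvanov}; your route---extracting $\bar\partial\theta^{0,1}=0$ and $\partial\theta^{1,0}=0$ from $d\theta\wedge\Omega^n=0$ by bidegree, then computing $dd^*\omega=-i\partial\bar\partial\log|F|^2=-i\partial\bar\partial\log\det g$---is more self-contained and avoids the external reference. Your treatment of the compact-case equivalences (via $\int_M|d^*\omega|^2=\int_M\langle\omega,dd^*\omega\rangle$) is also more explicit than the paper's ``trivial''. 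So: fill the gap in the first paragraph, and the rest stands as a cleaner alternative for the second claim.
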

 \begin{proof}
It is quite easy to observe that 
\[
*\Omega=\frac{1}{(n-1)!\,n!}\Omega^{n}\wedge \bar \Omega^{n-1}
\]
and 
$$
*\alpha=-\frac{1}{(n-1)!\,n!}J\alpha \wedge \Omega^{n}\wedge \bar \Omega^{n-1}
$$
for every $1$-form $\alpha$ of type $(1,0)$ with respect to $I$. The HKT condition implies $ \theta=-Jd^*\omega_J=-Kd^*\omega_K $ \cite{IPap}, hence
\[
\begin{split}
J\theta&=d^*\omega_J=d^*(\Omega+\bar \Omega)=-*d*(\Omega+\bar \Omega)=-\frac{1}{(n-1)!\,n!}*d\left(\Omega^{n}\wedge \bar \Omega^{n-1}+\Omega^{n-1}\wedge \bar \Omega^n\right)\\
&=-\frac{1}{(n-1)!\,n!}*\left(d \Omega^{n}\wedge \bar \Omega^{n-1}+\Omega^{n-1}\wedge d\bar \Omega^n\right)\,.
\end{split}
\]
Clearly there exists a $ (0,1) $-form $ \beta $ such that $ d\Omega^n=\beta \wedge \Omega^n $, but from these computations it follows that
\[
J\theta=-\frac{1}{(n-1)!\,n!}*\left(\beta \wedge  \Omega^{n}\wedge \bar \Omega^{n-1}+\bar \beta \wedge \Omega^{n-1}\wedge \bar \Omega^n\right)=J(\beta +\bar \beta)
\]
and thus $\beta \wedge \Omega^n=\theta \wedge \Omega^n$ as claimed. Moreover since $\omega$ is HKT it is Bismut-Ricci flat and formula (2.7) in \cite{AlexIvanov} together implies ${\rm Ric}(\omega)=dd^*\omega $. The last statement is trivial.
\end{proof}

\begin{oss}\label{rem}{\rm 
In \cite{BDV} it is proved that on a compact nilmanifold $(N/\Gamma,I,J,K)$ a left-invariant HKT metric is always balanced. This fact can be also deduced from Lemma \ref{lemma3} taking into account that the Chern-Ricci form of a left-invariant Hermitian metric on a complex nilmanifold is always zero (see e.g. \cite[Proposition 2.1]{LV}).} 
\end{oss}

\section{From a bound of the Laplacian to $C^{2,\alpha}$-estimates}
In \cite{TWWY} it is proved a general theorem for deducing $C^{2,\alpha}$  estimates of a solution of an elliptic equation from a bound on the Laplacian of the solution. The theorem is applied in \cite{TWWY} to a large class of 
equations in Hermitian geometry. In this section we observe that it can be also applied to the quaternionic Monge-Amp\`ere equation.

\begin{teor}\label{C2alphaelliptic}
Let $(M,I,J,K,g)$ ba a compact hyperhermitian manifold and $\varphi$ a solution to \eqref{QMA} such that
\begin{equation}\label{Laplell}
\|\varphi\|_{C^0}\leq C\,\, \mbox{ and }\,\,\mathrm{tr}_g g_\phi \leq C\,,
\end{equation}  
for some constant $C>0$. Then for $0<\alpha<1$ there exists a constant $C_\alpha>0$ depending only on $ (M,I,J,K,g) $, $f$, $\alpha$ and $ C $ such that 
$$
\|\nabla^2\varphi\|_{C^{\alpha}}\leq C_\alpha\,.
$$  
\end{teor}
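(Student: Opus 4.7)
\medskip

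The plan is to reduce the statement to the general $C^{2,\alpha}$ machinery in \cite{TWWY}. That paper establishes, for a broad class of fully nonlinear elliptic operators on a compact Hermitian manifold of the form $F(A[\varphi])=h$ with $A[\varphi]$ depending linearly on the real Hessian of $\varphi$, the implication
$$
\|\varphi\|_{C^0}+\sup_M \Delta\varphi \le C \;\Longrightarrow\; \|\nabla^2\varphi\|_{C^\alpha}\le C_\alpha,
$$
provided that $F$ is concave, that the equation is uniformly elliptic on the solution, and that the coefficients depend smoothly on the point of $M$. My aim is therefore to verify these three conditions in our setting.

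By Lemma \ref{dedeJ} the quaternionic Monge--Amp\`ere equation \eqref{QMA} can be recast in the $(1,1)$-form picture \eqref{QMA2} as
$$
\log\det(g_\varphi)-\log\det(g)=2(f+b),
$$
so we are looking at $F(A[\varphi])=2(f+b)$ with $F=\log\det$ and $A[\varphi]=g_\varphi=g+\tfrac{1}{2}(i\partial\bar\partial\varphi-iJ\partial\bar\partial\varphi)$. Lemma \ref{dedeJ} shows that $A[\varphi]$ depends linearly on the second derivatives of $\varphi$, with smooth coefficients built out of $g$, $I$ and $J$. First I would exploit the two hypotheses in \eqref{Laplell} to upgrade them to uniform ellipticity. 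The bound $\mathrm{tr}_g g_\varphi\le C$, which by Lemma \ref{dedeJ} is equivalent to an upper bound on $2n-\Delta_\varphi \varphi$ up to reinterpretation, gives an upper bound on the eigenvalues of $g_\varphi$ relative to $g$; combined with the equation, which fixes $\det(g_\varphi)/\det(g)=e^{2(f+b)}$ (with $b$ controlled by $\|\varphi\|_{C^0}$ and $f$ via integration against $\Omega^n\wedge\bar\Omega^n$ using \eqref{frac}), this forces the smallest eigenvalue of $g_\varphi$ to stay bounded below by a positive constant depending only on $C$ and the background data. Thus $\lambda\, g\le g_\varphi\le \Lambda\, g$ uniformly on $M$.

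Next I would check the structural hypotheses of \cite{TWWY}. Concavity of $\log\det$ on the cone of positive Hermitian endomorphisms is classical, and concavity is inherited on any affine subspace, so in particular on the affine space of $q$-real positive $(2,0)$-forms to which $\Omega_\varphi$ is restricted. Smoothness of the structural coefficients in the base point is immediate from the smoothness of $(g,I,J,K)$. With these ingredients in place, the main theorem of \cite{TWWY} applies verbatim to the linearized operator and produces the announced $C^{2,\alpha}$-bound for $\nabla^2\varphi$, with constant depending only on $(M,I,J,K,g)$, $f$, $\alpha$ and $C$.

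The main point to verify carefully is that the quaternionic twist $\tfrac{i}{2}(\partial\bar\partial\varphi-J\partial\bar\partial\varphi)$ truly fits the ``linear in the Hessian of $\varphi$ modulo controlled lower-order terms'' template of \cite{TWWY}, rather than merely the classical Hermitian template $g+i\partial\bar\partial\varphi$. I expect this to be essentially mechanical: the extra $J$-conjugated piece is linear in the Hessian with smooth coefficients, and the TWWY framework is designed to accommodate exactly such linear modifications. Once this bookkeeping is settled, the theorem follows.
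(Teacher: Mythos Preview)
Your proposal is correct and follows the same route as the paper: rewrite \eqref{QMA} as $\log\det$ applied to a Hermitian matrix depending affinely on the real Hessian of $\varphi$, extract uniform ellipticity from the trace bound combined with the determinant equation, and feed this into \cite[Theorem~1.2]{TWWY}. Two minor corrections: the bound on $b$ in the paper comes from the maximum principle applied directly to \eqref{QMA} (yielding $|b|\le\sup_M|f|$), not from integration, which in the general non-HKT setting does not obviously control $b$; and the step you label ``essentially mechanical'' --- that the linear map sending the real Hessian to its $J$-symmetrized complex Hessian preserves positive semidefiniteness and satisfies the two-sided norm bound required by condition $\mathbf{H2}.(3)$ of \cite{TWWY} --- is the one genuinely quaternionic verification, and the paper carries it out explicitly.
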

\begin{proof}
Choose $ I $-holomorphic local coordinates $ (z^1,\dots,z^{2n}) $ in a chart, which for simplicity we identify with the unit ball $ B_1\subseteq \C^{2n} $. Consider also the underlying real coordinates $ (x^1,\dots,x^{4n}) $ given by $ z^k=x^k+ix^{2n+k} $ for $ k=1,\dots,2n $ and the usual real representation of complex matrices defined as
\[
\iota(H):=\begin{pmatrix}
\,\,\,\,\Re(H) & \Im(H)\\
-\Im(H) & \Re(H)
\end{pmatrix}\,.
\]
Let $ \mathrm{Herm}(2n) $ and $ \mathrm{Sym}(4n) $ be the spaces of $ 2n\times 2n $ Hermitian matrices and $ 4n\times 4n $ real symmetric matrices respectively. Notice that $ \iota $ sends $ \mathrm{Herm}(2n) $ to $ \mathrm{Sym}(4n) $. We define the following functions
\begin{itemize}
\itemsep0.3em
\item $ \mathcal{F}\colon \mathrm{Sym}(4n) \to \R $ given by $ \mathcal{F}(N):=\frac{1}{2}\log \det(N) $;
\item $ \mathcal{S} \colon B_1 \to \mathrm{Sym}(4n) $ given by $ \mathcal{S}(x):=\iota(g(x)) $;
\item $ \mathcal{T}\colon \mathrm{Sym}(4n) \times B_1 \to \mathrm{Sym}(4n) $ given by $ \mathcal{T}(N,x):=\frac{1}{4}(p(N)+\iota(\,^t\!J(x))p(N)\iota(J(x))) $, where $ p $ is the projection $ p(N):=\frac{1}{2} (N+\,^t\!I NI) $.
\end{itemize}
Here, we are writing $ g(x) $ and $ J(x) $ for the complex matrices of $ g $ and $ J $ at the point $ x $ in the coordinates $ (z^1,\dots,z^{2n}) $. For simplicity we set $ \tilde J=\iota(J) $. Since $ p(D^2_\R u(x))=2\iota(D^2_\C u(x))$ (here $ D^2_\R $ and $ D^2_\C $ are the real and complex Hessian, respectively) for any function $ u\colon B_1 \to \R $ and $ \det(\iota(H))=\det(H)^2 $ for any Hermitian matrix $ H $, we have
%\[
%\dot \phi -\log \det\left(g+\frac{1}{2}\mathrm{Hess}_\C\phi+\frac{1}{2}J^t\mathrm{Hess}_\C\phi J\right)=f-\log \det g
%\]
\[
\begin{aligned}
\mathcal{F}(\mathcal{S}(x)+\mathcal{T}(D^2_\R\phi(x)))&=\frac{1}{2}\log \det \left( \iota (g_{i\bar j})+\frac{1}{2}\iota (\phi_{i\bar j})+\frac{1}{2}\tilde J_i^{\bar s}\iota (D^2_\C\phi)_{r\bar s} \tilde J_{\bar j}^r\right)(x)\\
&=\frac{1}{2}\log \det \iota \left(g_{i\bar j}+\frac{1}{2}\phi_{i\bar j}+\frac{1}{2}J_i^{\bar s}(D^2_\C\phi)_{r\bar s} J_{\bar j}^r\right)(x)\\
&=\log \det \left(g_{i\bar j}+\frac{1}{2}\phi_{i\bar j}+\frac{1}{2}J_i^{\bar s}(D^2_\C\phi)_{r\bar s} J_{\bar j}^r\right)(x)\\
&=2f(x)+2b+\log \det g(x)\,.
\end{aligned}
\]

The arithmetic-geometric means inequality gives $ \mathrm{tr}_g{g_\phi}\geq 2n (\frac{\det g_\phi}{\det g})^{1/2n} =2n\mathrm{e}^{(f+b)/n}\geq C' $, because $|b|$ is bounded by $\sup |f|$ using a standard maximum principle argument directly on equation \eqref{QMA}. Since also $ \mathrm{tr}_{g}g_\phi\leq C $ by assumption \eqref{Laplell}, we then have
\[
C_0^{-1}(\delta_{i\bar j})\leq g_{i\bar j}(x)+\frac{1}{2}\phi_{i\bar j}(x)+\frac{1}{2}J_i^{\bar s}(x)\phi_{r\bar s}(x) J_{\bar j}^r(x)\leq C_0(\delta_{i\bar j})
\]
for $ x\in B_1 $ and a constant $ C_0>0 $. Since $ \iota $ preserves (semi)positivity, i.e. $ H_1\leq H_2 $ if and only if $ \iota(H_1)\leq \iota(H_2) $, we deduce
\[
C_0^{-1}(\delta_{i j})\leq \mathcal{S}(x)+\mathcal{T}(D^2_\R\phi(x))\leq C_0(\delta_{i j})\,.
\]
Let $ \mathcal{E} $ denote the compact convex subset
\[
\mathcal{E}:=\{ N\in \mathrm{Sym}(4n) \mid C_0^{-1}(\delta_{ij})\leq N\leq C_0(\delta_{ij}) \}\,.
\]
We check that all the assumptions $ \mathbf{H1}-\mathbf{H3}$ of \cite[Theorem 1.2]{TWWY} are satisfied.
\begin{itemize}
\itemsep0.3em
\item It is well-known that $ \mathcal{F} $ is uniformly elliptic and concave on $ \mathcal{E} $ (conditions $ \mathbf{H1}.(1) $ and $ \mathbf{H1}.(2) $ of \cite[Theorem 1.2]{TWWY}). Moreover $ \mathbf{H1}.(3) $ is trivial for $ \mathcal{F} $ since it does not depend on $ x $.
\item Next we verify conditions $ \mathbf{H2} $ in \cite[Theorem 1.2]{TWWY}. Condition $ \mathbf{H2}.(1)$ is easily checked and $ \mathbf{H2}.(2) $ is straightforward. We just need to show that also $ \mathbf{H2}.(3)$ holds. For any positive semidefinite $N \in \mathrm{Sym}(4n) $ and $v\in \R^{4n}$ we have 
\[
\begin{split}
\quad \qquad \langle \mathcal{T}(N,x)v, v\rangle&=\frac{1}{8}\left( \langle Nv, v\rangle +\langle \,^t\!INIv,v\rangle+\langle  \,^t\!\tilde
JN\tilde
Jv,v\rangle+\langle \,^t\!\tilde
J\,^t\!INI\tilde
 Jv,v\rangle \right)\!(x)\\
&=\frac{1}{8}\left( \langle Nv, v\rangle +\langle NIv,Iv\rangle+\langle N\tilde Jv,\tilde Jv\rangle+\langle NI\tilde Jv, I \tilde Jv\rangle \right)\!(x)\geq 0\,.
\end{split}
\]
We may assume without loss of generality  that  $ J(0) $ is orthogonal and we get  
\[
\frac{1}{8}\|N\|\leq \|\mathcal{T}(N,0)\|\leq \frac12 \|N\|\,,
\]
where $ \|A\|=\sup_{\|v\|=1} \langle Av,v\rangle $. Possibly shrinking the ball, we may assume that $ J(x) $ is close to $ J(0) $  and $ \mathbf{H2}.(3) $ is satisfied.

\item Condition $ \mathbf{H3} $ obviously holds. 
\end{itemize}  
Since the assumptions $ \mathbf{H1}-\mathbf{H3}$ of \cite[Theorem 1.2]{TWWY} are verified the theorem follows.
\end{proof}

\begin{oss}\label{semplification}{\rm 
Theorem \ref{C2alphaelliptic} was already proved by Alesker in the case of compact locally flat HKT manifolds \cite{Alesker (2013)}. Our version allows to simplify the proof of the main theorem of \cite{Dinew-Sroka}. Indeed the proof of the Alesker-Verbitsky conjecture on hyperk\"ahler manifolds is obtained in \cite{Dinew-Sroka}  proving independently the $C^1$ estimate and a bound for the Laplacian  and then  combining them in order to obtain the second order estimate. Hence the proof of the Dinew and Sroka theorem can be alternatively obtained bypassing the gradient estimate and using our Theorem \ref{C2alphaelliptic}. We also note that Theorem \ref{C2alphaelliptic} does not need $g$ to be HKT.}
\end{oss}

Next we focus on the \lq\lq parabolic counterpart\rq\rq of Theorem \ref{C2alphaelliptic}:
\begin{teor}\label{C2alpha}
Let $0<\alpha<1$, $\varphi(t)$, $t\in[0,T)$, a solution  to \eqref{pMA} such that
\begin{equation}\label{Lapl}
\osc_M \varphi \leq C\,, \quad \|\dot \varphi\|_{C^0}\leq C\,, \quad
\mathrm{tr}_g g_\phi \leq C\,,
\end{equation}
for some positive constant $C$.
Let $\epsilon\in (0,T)$. 
Then $\varphi(t)$ satisfies the following a priori estimate
$$
\|\nabla^2\varphi\|_{C^{\alpha}}\leq C_\alpha
$$  
in $[\epsilon,T)$, where $C_\alpha>0$ depends only on $ (M,I,J,K,g) $, $f$, $\alpha$, $ C $ and $\epsilon$.
\end{teor}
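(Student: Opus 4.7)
The plan is to mirror the proof of Theorem \ref{C2alphaelliptic} using the parabolic counterpart of the Tosatti--Wang--Weinkove--Yang $C^{2,\alpha}$-estimate. After fixing $I$-holomorphic local coordinates on a ball which we identify with $B_1\subseteq \C^{2n}\simeq\R^{4n}$, I would reuse verbatim the functions $\mathcal{F}$, $\mathcal{S}$, $\mathcal{T}$ and the compact convex set $\mathcal{E}$ introduced in the proof of Theorem \ref{C2alphaelliptic}. Since \eqref{pMA} is equivalent to $(\Omega+\partial\partial_J\varphi)^n=\mathrm{e}^{(\dot\varphi+2f)/2}\Omega^n$, \eqref{frac} yields $\det g_\varphi=\mathrm{e}^{\dot\varphi+2f}\det g$, and the same computation as in the elliptic case rewrites \eqref{pMA} locally as the fully nonlinear parabolic equation
\begin{equation*}
\dot\varphi(x,t) = \mathcal{F}\bigl(\mathcal{S}(x) + \mathcal{T}(D^2_\R \varphi(x,t),x)\bigr) - 2f(x) - \log\det g(x).
\end{equation*}

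The next step is to establish uniform parabolicity on the time interval where the a priori bounds \eqref{Lapl} hold. From $\|\dot\varphi\|_{C^0}\leq C$ and the identity above, $\det g_\varphi$ is two-sided bounded in terms of $C$ and $f$; coupling the lower bound on $\det g_\varphi$ with the upper bound $\mathrm{tr}_g g_\varphi \leq C$ through the arithmetic--geometric means inequality yields, at every point and every admissible $t$,
\begin{equation*}
C_0^{-1}(\delta_{i\bar j})\leq g_\varphi(x,t)\leq C_0(\delta_{i\bar j}),
\end{equation*}
so that $\mathcal{S}(x)+\mathcal{T}(D^2_\R\varphi(x,t),x)$ stays in $\mathcal{E}$. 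Because $\mathcal{F}$, $\mathcal{S}$, $\mathcal{T}$ are exactly those of the elliptic proof, the structural conditions $\mathbf{H1}$--$\mathbf{H3}$ of \cite[Theorem 1.2]{TWWY} hold with no extra work; in particular the non-translation-invariant condition $\mathbf{H2}.(3)$ is verified by shrinking the coordinate ball precisely as in the elliptic case, and no condition on the $t$-derivative of the coefficients is needed since the background data $g,J,f$ do not depend on time.

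I would then invoke the parabolic analogue of \cite[Theorem 1.2]{TWWY}, or equivalently a Krylov-type interior $C^{2,\alpha}$-estimate for concave uniformly parabolic equations with $C^\alpha$ coefficients, on parabolic cylinders of the form $B_r(x_0)\times[\tau,\tau+r^2]$ with $x_0\in M$, $\tau\geq \epsilon/2$ and $r$ small. This produces an interior Hölder bound on the spatial Hessian with constant depending only on $(M,I,J,K,g)$, $f$, $\alpha$, $C$ and $\epsilon$. Covering $M\times[\epsilon,T)$ by finitely many such cylinders then yields the claimed global estimate.

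The main obstacle is really just checking that the parabolic $C^{2,\alpha}$-estimate is applicable in this geometric setting: the hypotheses on $\mathcal{F}$, $\mathcal{S}$, $\mathcal{T}$ coincide with those already verified in the elliptic case, and the role of $\epsilon>0$ is simply to provide the necessary room in time so that we remain strictly inside the parabolic cylinder, away from the initial surface $\{t=0\}$, where only the initial datum $\varphi(\cdot,0)=0$ is available.
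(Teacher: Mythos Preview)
Your approach is essentially the same as the paper's: set up $\mathcal{F},\mathcal{S},\mathcal{T}$ exactly as in the elliptic case, check $\mathbf{H1}$--$\mathbf{H3}$, and invoke the parabolic analogue of the Tosatti--Wang--Weinkove--Yang estimate (the paper cites \cite[Theorem 5.1]{Chu}). However, you skip one point that the paper singles out as the only nontrivial extra step.

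The parabolic interior $C^{2,\alpha}$ theorem requires a $C^0$ bound on the solution over the parabolic cylinder, and the constant it produces depends on that bound. The hypothesis \eqref{Lapl} only gives $\osc_M\varphi\le C$, not $\|\varphi\|_{C^0}\le C$; since $\dot\varphi$ is merely bounded, $\varphi$ itself can drift linearly in $t$ and hence be unbounded on $M\times[\epsilon,T)$ when $T=\infty$. Your sentence ``covering $M\times[\epsilon,T)$ by finitely many such cylinders'' is therefore incorrect in general, and even on a single cylinder you have not said how to control $\|\varphi\|_{C^0}$. The paper fixes this exactly as in \cite[Lemma~6.1]{Chu2}: for each $a\in(0,T-1)$ it sets $\varphi_a(x,t):=\varphi(x,t+a)-\inf_{B_1\times[a,a+1)}\varphi$, so that $\|\varphi_a\|_{C^0}\le\osc_M\varphi\le C$, observes that $\varphi_a$ satisfies the same parabolic equation (which is invariant under adding constants), applies Chu's theorem on $B_1\times[0,1)$ to get a bound independent of $a$, and then lets $a$ vary. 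Your argument becomes correct once you insert this normalization on each time window and replace ``finitely many'' by a uniform estimate over the sliding windows.
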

\begin{proof}
Here we apply the general result of Chu \cite[Theorem 5.1]{Chu}, in the same fashion as \cite[Lemma 6.1]{Chu2}. In the same notations of the previous theorem, with $ \mathcal{F},\mathcal{S} $ and $ \mathcal{T} $ chosen in the same way, we have
\[
\begin{aligned}
\dot \phi(x,t) -\mathcal{F}(\mathcal{S}(x)+\mathcal{T}(D^2_\R\phi(x,t)))
%&=\dot \phi(x,t) -\frac{1}{2}\log \det \left( \iota (g_{i\bar j})+\frac{1}{2}\iota (\phi_{i\bar j})+\frac{1}{2}\tilde J_i^{\bar s}\iota (D^2_\C\phi)_{r\bar s} \tilde J_{\bar j}^r\right)\!\!(x,t)\\
%&=\dot \phi(x,t) -\frac{1}{2}\log \det \iota \left(g_{i\bar j}+\frac{1}{2}\phi_{i\bar j}+\frac{1}{2}J_i^{\bar s}(D^2_\C\phi)_{r\bar s} J_{\bar j}^r\right)\!\!(x,t)\\
%&=\dot \phi(x,t)-\log \det \left(g_{i\bar j}+\frac{1}{2}\phi_{i\bar j}+\frac{1}{2}J_i^{\bar s}(D^2_\C\phi)_{r\bar s} J_{\bar j}^r\right)\!\!(x,t)\\
%&
=-2f(x)-\log \det g(x)\,.
\end{aligned}
\]

From \eqref{Lapl}, the arithmetic-geometric means inequality and Lemma \ref{C0} we get $ C\geq \mathrm{tr}_g{g_\phi}\geq 2n (\frac{\det g_\phi}{\det g})^{1/2n} =2n\mathrm{e}^{(\dot \phi +2f)/2n}\geq C^{-1} $. We then have
\[
C_0^{-1}(\delta_{i\bar j})\leq g_{i\bar j}(x)+\frac{1}{2}\phi_{i\bar j}(x,t)+\frac{1}{2}J_i^{\bar s}(x)\phi_{r\bar s}(x,t) J_{\bar j}^r(x)\leq C_0(\delta_{i\bar j})
\]
for $ (x,t)\in B_1\times (0,1] $ and a uniform constant $ C_0>0 $. We then infer
\[
C_0^{-1}(\delta_{i j})\leq \mathcal{S}(x)+\mathcal{T}(D^2_\R\phi(x,t))\leq C_0(\delta_{i j})\,.
\]
Let $ \mathcal{E} $ denote the compact convex subset
\[
\mathcal{E}:=\{ N\in \mathrm{Sym}(4n) \mid C_0^{-1}(\delta_{ij})\leq N\leq C_0(\delta_{ij}) \}\,.
\]
All the assumptions $ \mathbf{H1}-\mathbf{H3}$ of \cite[Theorem 5.1]{Chu} are easily checked as in the previous theorem. Nonetheless, at this point, we cannot directly apply \cite[Theorem 5.1]{Chu} since $ \phi $ does not necessarily satisfy a $C^0$ a priori bound.  However, we can overcome this issue working  as in \cite[Lemma 6.1]{Chu2}. Here is where the constant $\epsilon$ plays a role. We consider the two cases $T<1$ and $T\geq 1$, separately. 

If $T<1$, we have a uniform $C^0$ bound for $\phi$ since 
\[
|\phi|=\left \lvert \int_0^t \dot \phi\, dt \right \rvert \leq T\sup_{M\times [0,T)} |\dot \phi|\leq C\,.
\]
In this case we can directly apply \cite[Theorem 5.1]{Chu} to conclude.
 
If  $T\geq 1$ we consider, for any fixed $a\in (0,T-1)$, the following auxiliary function
\[
\phi_a(x,t):=\phi(x,t+a)-\inf_{B_1\times [a,a+1)}\phi\,, \qquad t\in [0,1)\,.
\]
Clearly we have $\|\phi_a\|_{C^0}\leq \osc_M \phi \leq C$. Moreover, from \eqref{pMA} we see that $ \phi_a $ satisfies the parabolic Monge-Amp\`{e}re equation
\[
\dot \phi_a=2\log \frac{(\Omega+\partial \partial_J \phi_a)^n }{\Omega^n}-2f\,.
\]
Since, from \eqref{Lapl} we know that $ \mathrm{tr}_{g}g_{\phi_a} $ is uniformly bounded from above we may apply \cite[Theorem 5.1]{Chu} to $\phi_a$ and deduce that for any fixed $\epsilon \in (0,\frac{1}{2})$ we have
\[
\|\nabla^2 \phi \|_{C^{\alpha}(B\times [a+\epsilon,a+1))} \leq \|\nabla^2 \phi_a \|_{C^{\alpha}(B_1\times [\epsilon,1))}\leq C\,,
\]
where $C$ is a uniform constant that depends on $\epsilon$ and $\alpha$. Since $a\in (0,T-1)$ is arbitrary we obtain the estimate
\[
\|\nabla^2 \phi \|_{C^{\alpha}(B_1\times [\epsilon,T))}\leq C\,,
\]
allowing us to conclude. 
\end{proof}

\begin{oss}\label{Schauder}
{\rm 
As usual in the elliptic case one can deduce higher order estimates from Theorem \ref{C2alphaelliptic}  by using a standard bootstrapping argument via Schauder estimates and obtaining that under the assumptions of Theorem \ref{C2alphaelliptic}  
$$
\|\nabla^{k}\varphi\|_{C^\alpha}\leq C_{k+\alpha}
$$
for constants $C_{k+\alpha}$ depending only on $ (M,I,J,K,g) $, $f$, $\alpha$, $k$ and $C$.

Analogously in the parabolic case under the assumptions of Theorem \ref{C2alpha}, $\varphi(t)$ satisfies 
$$
\|\nabla^k\varphi\|_{C^{\alpha}}\leq C_{k+\alpha} 
$$  
in $[\epsilon,T)$, where $C_{k+\alpha}$ depends only on $ (M,I,J,K,g) $, $f$,  $\alpha$, k, $\epsilon$ and $C$. 
}
\end{oss}

\section{Proof of Theorem \ref{main}}\label{secproof}
%We can now focus on the proof of Theorem \ref{main}. 
Let $(M,I,J,K,g)$ be a compact hyperhermitian manifold and, for $f\in C^{\infty}(M)$, consider the parabolic quaternionic Monge-Amp\`{e}re equation \eqref{pMA}.

%, which, in view of \eqref{frac} and Lemma \ref{dedeJ}, can be alternatively rewritten as  
%\begin{equation}\label{cpMA}
%\dot \phi(t)=\log \frac{(\omega+\frac{1}{2}i\partial \bar \partial \phi(t)-\frac{1}{2}iJ\partial \bar \partial \phi(t))^{2n}}{\omega^{2n}}-f\,, \qquad \phi(0)=0\,.
%\end{equation}

For $ \phi \in C^\infty(M) $ let
\[
{\rm P}(\phi)=2\log \frac{\Omega_\phi^n}{\Omega^n}-2f\,.
\]
The first variation of ${\rm P}$ at $\varphi$ is 
\[
{\rm P}_{*\vert \phi}(\psi)=2n\frac{\partial \partial_J\psi\wedge \Omega_\varphi^{n-1}}{\Omega_\varphi^{n}}=\Delta_{\phi} \psi,
\]
where the last equality can be easily checked by using for instance $I$-holomorphic coordinates \cite[Remark 2.13]{Dinew-Sroka}. It follows that equation \eqref{pMA} is strictly parabolic. In particular it always admits a solution $\varphi(t)$, $t\in [0,\epsilon)$, for some $\epsilon$ small enough. 

\begin{lem}\label{equiv}
The quaternionic parabolic Monge-Amp\`ere equation \eqref{pMA} can be alternatively rewritten as
$$
\dot \varphi=\log \frac{\omega_{\varphi}^{2n}}{\omega^{2n}}-2f\,,\quad \varphi(0)=0\,. 
$$
\end{lem}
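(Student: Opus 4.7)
The plan is to deduce the claimed reformulation from the pointwise identity \eqref{frac} applied both to the background data $(\Omega,\omega)$ and to the data $(\Omega_\phi,\omega_\phi)$ associated with $g_\phi$, together with the q-reality of $\Omega$. First I would write $\Omega_\phi^n = F\,\Omega^n$ for a smooth complex function $F$ on $M$, and show that $F$ is real and strictly positive. Reality follows from q-reality: since $\Omega$ and $\Omega_\phi$ are q-real $(2,0)$-forms, so are $\Omega^n$ and $\Omega_\phi^n$, i.e.\ $J(\Omega^n)=\bar\Omega^n$ and $J(\Omega_\phi^n)=\bar\Omega_\phi^n$ (with $J$ acting on forms by $(J\alpha)(X_1,\dots,X_k)=\alpha(JX_1,\dots,JX_k)$). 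Applying $J$ to $\Omega_\phi^n = F\,\Omega^n$ and using that $J$ acts trivially on functions gives $\bar\Omega_\phi^n = F\,\bar\Omega^n$, while conjugation of the original identity gives $\bar\Omega_\phi^n=\bar F\,\bar\Omega^n$; hence $F=\bar F$. Positivity of $F$ then follows from the pointwise positivity of $\Omega_\phi$.

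Next I would use \eqref{frac} for both metrics:
\[
\frac{\Omega_\phi^n\wedge\bar\Omega_\phi^n}{(n!)^2}=\frac{\omega_\phi^{2n}}{(2n)!},\qquad
\frac{\Omega^n\wedge\bar\Omega^n}{(n!)^2}=\frac{\omega^{2n}}{(2n)!}.
\]
Taking the ratio and substituting $\Omega_\phi^n\wedge\bar\Omega_\phi^n = F^2\,\Omega^n\wedge\bar\Omega^n$ yields
\[
F^2=\frac{\omega_\phi^{2n}}{\omega^{2n}},
\]
so, since $F>0$, $2\log F=\log(\omega_\phi^{2n}/\omega^{2n})$. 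Plugging this into \eqref{pMA} gives the claimed reformulation, and the initial condition $\phi(\cdot,0)=0$ is unchanged. The only mildly subtle point — and the one I would present carefully — is the reality of $F$, which is what allows one to interpret $\log(\Omega_\phi^n/\Omega^n)$ in \eqref{pMA} as an ordinary real logarithm; everything else is an immediate consequence of \eqref{frac}.
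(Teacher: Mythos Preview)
Your argument is correct and follows the same route as the paper: use the reality of the ratio $\Omega_\phi^n/\Omega^n$ together with \eqref{frac} to obtain $(\Omega_\phi^n/\Omega^n)^2=\omega_\phi^{2n}/\omega^{2n}$, and then take logarithms. The only difference is that you spell out the q-reality justification for why the ratio is real (and positive), which the paper simply asserts.
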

\begin{proof}
Since the ratio $\Omega_\varphi^n/\Omega^n$ is real, we have 

$$
\left(\frac{\Omega_\phi^n}{\Omega^n}\right)^2=\frac{\Omega_\phi^n\wedge\bar \Omega_\varphi^n}{\Omega^n\wedge\bar \Omega^n}=\frac{\omega_\varphi^{2n}}{\omega^{2n}}
$$
where in the last equality we used \eqref{frac}. The claim follows. 
\end{proof}

For a solution $ \phi(t) $ to \eqref{pMA} we shall also consider its normalization
$$
\tilde \phi(t):=\varphi(t)-\frac{1}{\int_M\Omega^n\wedge \bar \Omega^n}\int_M \varphi(t)\,\Omega^n\wedge \bar \Omega^n\,.
$$

\begin{lem}\label{C0}
A solution $\varphi(t)$ to \eqref{pMA} satisfies the following a priori estimates
\[
\|\dot \phi \|_{C^0}\leq C\,, \qquad \osc_M \phi \leq C\,, \qquad \|\dot{\tilde \phi} \|_{C^0}\leq C\,, \qquad \| \tilde  \phi\|_{C^0} \leq C\,, 
\]
for a uniform constant $C>0$ depending only on $ (M,I,J,K,g) $ and $ \|f\|_{C^0} $. 
\end{lem}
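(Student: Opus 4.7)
The plan is to establish the four estimates in sequence, the key reduction being that the $C^0$ bound on $\dot\phi$ turns the parabolic equation at each time into an elliptic quaternionic Monge-Amp\`ere equation, to which one can apply the $C^0$ theory.

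\textbf{Step 1: bound on $\|\dot\phi\|_{C^0}$.} First, I would differentiate \eqref{pMA} in $t$ and use the first variation formula ${\rm P}_{*\vert\phi}(\psi)=\Delta_\phi\psi$ recorded above to obtain
\[
\ddot\phi=\Delta_\phi\dot\phi.
\]
Since $\Delta_\phi$ is a second-order elliptic operator with no zero-order term, the parabolic maximum principle forces $\max_M\dot\phi(\cdot,t)$ to be non-increasing and $\min_M\dot\phi(\cdot,t)$ to be non-decreasing. At $t=0$ the initial condition $\phi(\cdot,0)=0$ gives $\dot\phi(\cdot,0)=-2f$, so $\|\dot\phi\|_{C^0}\leq 2\|f\|_{C^0}$.

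\textbf{Step 2: bound on $\osc_M\phi$.} Fix $t$ and invoke Lemma \ref{equiv} to recast \eqref{pMA} as the elliptic quaternionic Monge-Amp\`ere equation
\[
\omega_{\phi(t)}^{2n}=e^{\dot\phi(t)+2f}\,\omega^{2n}.
\]
By Step 1 the exponent is bounded in $C^0$ uniformly in $t$, with a bound depending only on $\|f\|_{C^0}$. At this point I would apply Sroka's $L^\infty$ estimate for the elliptic quaternionic Monge-Amp\`ere equation on compact hyperhermitian manifolds \cite{Sroka22} (which, as noted in the introduction, does not require the HKT hypothesis on $g$). This yields $\osc_M\phi(\cdot,t)\leq C$ uniformly in $t$, with $C$ depending only on $(M,I,J,K,g)$ and $\|f\|_{C^0}$.

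\textbf{Step 3: remaining estimates.} Since $\tilde\phi$ differs from $\phi$ by a function of $t$ alone, we have $\dot{\tilde\phi}=\dot\phi-\frac{1}{\int_M\Omega^n\wedge\bar\Omega^n}\int_M\dot\phi\,\Omega^n\wedge\bar\Omega^n$, whence $\|\dot{\tilde\phi}\|_{C^0}\leq 2\|\dot\phi\|_{C^0}$. Moreover $\osc_M\tilde\phi=\osc_M\phi$; and since $\tilde\phi$ has zero spatial average with respect to $\Omega^n\wedge\bar\Omega^n$, there exists $x_t\in M$ with $\tilde\phi(x_t,t)=0$, which gives $\|\tilde\phi\|_{C^0}\leq\osc_M\phi\leq C$.

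The only genuinely delicate step is Step 2: a purely parabolic oscillation argument would be comparatively more involved, and the trick is precisely that once $\dot\phi$ is controlled the equation is of elliptic Monge-Amp\`ere type for each frozen $t$, so one can feed it straight into the existing elliptic theory.
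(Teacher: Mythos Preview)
Your proof is correct and follows essentially the same approach as the paper: differentiate in $t$ and apply the parabolic maximum principle to bound $\dot\phi$, then freeze $t$ and invoke Sroka's elliptic $C^0$ estimate for the oscillation, and finally use the zero-average property of $\tilde\phi$ to upgrade the oscillation bound to a $C^0$ bound. The only cosmetic difference is that the paper writes the frozen-time equation in the $(2,0)$-form version $(\Omega+\partial\partial_J\phi)^n=\mathrm{e}^{\dot\phi/2+f}\Omega^n$ rather than via Lemma~\ref{equiv}, but this is the same equation.
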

\begin{proof}
The technique of the proof is standard and for example analogous to that of \cite[Theorems 2.1 and 2.2]{Gill}; it relies on the elliptic $ C^0 $-estimate obtained by Sroka \cite{Sroka22} on compact hyperhermitian manifolds. We give some details for completeness. Differentiating \eqref{pMA} we get that $ \dot \phi $ satisfies
\[
\partial_t \dot \phi=   \Delta_\phi \dot \phi
\]
and from the parabolic maximum principle we infer that $ \|\dot \phi \|_{C^0}\leq C $ for some uniform positive constant $ C $. On the other hand, viewing $ \phi $ as a solution of the quaternionic Monge-Amp\`{e}re equation
\[
(\Omega+\partial \partial_J \phi)^n=\mathrm{e}^{\frac{\dot \phi }{2}+ f}\Omega^n
\]
with datum $ \frac{\dot \phi}{2} + f $ we may use the main theorem in \cite{Sroka22} to get $ \osc_M \phi \leq C $. The $ C^0 $ bound on the time derivative of $ \tilde \phi $ is then straightforward. Moreover by definition of $ \tilde \phi $ for every $t\in\mathbb R_+$ such that $M\times \{t\}$ is in the domain of $\varphi$, there exists $x_0\in M$ such  that $ \tilde \phi(x_0,t)=0 $. Therefore
\[
| \tilde \phi(x,t) |=| \tilde \phi(x,t)-\tilde \phi(x_0,t) |=|\phi(x,t)- \phi(x_0,t)| \leq C\,,
\]
and the $ C^0 $ bound on $ \tilde \phi $ follows.
\end{proof}

\begin{lem}\label{C2}
Let $ \phi(t) $ be a solution to \eqref{pMA}. If $ \hat g $ is a hyperk\"ahler metric compatible with $ (I,J,K) $ then
\[
\mathrm{tr}_{\hat g} g_\phi \leq C\,,
\]
for a uniform constant $ C>0 $ depending only on $ (M,I,J,K,g) $, $\hat g$ and $ f $.
\end{lem}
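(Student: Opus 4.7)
The plan is to adapt the elliptic Laplacian estimate of Dinew and Sroka \cite{Dinew-Sroka} to the parabolic setting via a parabolic maximum principle argument applied to an auxiliary quantity. Let $\hat\omega$ denote the fundamental $(1,1)$-form of $\hat g$ with respect to $I$; since $\hat g$ is hyperk\"ahler, $\hat\omega$ is closed and the Chern connection of $\hat g$ (w.r.t.\ $I$) coincides with its Levi--Civita connection, so that $J$ is parallel and the bisectional curvature of $\hat g$ is uniformly bounded on $M$. I would introduce
\[
H:=\log \mathrm{tr}_{\hat g} g_\phi - A(\phi-\inf_{M\times [0,T)}\phi),
\]
for a constant $A>0$ to be chosen large enough, and apply the parabolic maximum principle to $H$.

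The core computation is an inequality of the form
\[
(\partial_t-\Delta_{g_\phi})\log \mathrm{tr}_{\hat g}g_\phi \leq C_1\,\mathrm{tr}_{g_\phi}\hat g,
\]
obtained by differentiating $\mathrm{tr}_{\hat g}g_\phi = \hat g^{i\bar j}(g_\phi)_{i\bar j}$ in $t$ (using \eqref{pMA}) and commuting covariant derivatives (using the hyperk\"ahler property of $\hat g$ to absorb the terms involving $J\partial\bar\partial\phi$ arising from Lemma \ref{dedeJ}). The standard Calabi-type inequality $|\nabla \mathrm{tr}_{\hat g}g_\phi|_{g_\phi}^2\le \mathrm{tr}_{\hat g}g_\phi \cdot (\text{good third-order terms})$ handles the gradient correction coming from the logarithm. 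On the other hand, by Lemma \ref{dedeJ} and Lemma \ref{C0},
\[
(\partial_t-\Delta_{g_\phi})(-A\phi)=-A\dot\phi+A(2n-\mathrm{tr}_{g_\phi}g)\leq A(C_0+2n)-Ac_0\,\mathrm{tr}_{g_\phi}\hat g,
\]
where $c_0>0$ comes from the equivalence $g\geq c_0\hat g$ on the compact manifold $M$.

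At a maximum point $(x_0,t_0)$ of $H$ with $t_0>0$ we have $(\partial_t-\Delta_{g_\phi})H\ge 0$; combining the two estimates above and choosing $A$ so large that $Ac_0>C_1+1$ yields a uniform upper bound for $\mathrm{tr}_{g_\phi}\hat g$ at $(x_0,t_0)$. The arithmetic--geometric means inequality applied to the eigenvalues of $g_\phi$ with respect to $\hat g$, together with
\[
\frac{\det g_\phi}{\det \hat g}=\mathrm{e}^{\dot\phi+2f}\,\frac{\det g}{\det\hat g}
\]
which is uniformly bounded by Lemma \ref{C0}, then converts this into a pointwise bound on $\mathrm{tr}_{\hat g}g_\phi(x_0,t_0)$. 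Propagating through the maximum $H\le H(x_0,t_0)$ and using the oscillation bound on $\phi$ gives the global estimate.

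The main obstacle will be the core differential inequality for $\mathrm{tr}_{\hat g}g_\phi$: the twist by $J$ in $\omega_\phi=\omega+\tfrac{i}{2}(\partial\bar\partial\phi-J\partial\bar\partial\phi)$ produces cross terms absent in the classical Yau or Cao computation, and one must use in an essential way that $J$ is $\hat\nabla$-parallel (with $\hat\nabla$ the Chern=Levi-Civita connection of $\hat g$) to rewrite $J\partial\bar\partial\phi$-contributions as genuine curvature terms of $\hat g$ that can then be bounded by $C\,\mathrm{tr}_{g_\phi}\hat g$, and not merely by the less favourable $\mathrm{tr}_{\hat g}g_\phi$.
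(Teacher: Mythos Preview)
Your proposal is correct and follows essentially the same route as the paper. The only difference is cosmetic: the paper applies the parabolic maximum principle to $Q=\mathrm{tr}_{\hat g}g_\phi-A\phi$ without the logarithm, so the nonpositive third-order term $-\hat g^{r\bar s}g_\phi^{i\bar l}g_\phi^{k\bar j}g^\phi_{k\bar l,\bar s}g^\phi_{i\bar j,r}$ is simply discarded rather than played off against the gradient correction via your Calabi-type Cauchy--Schwarz step. The substantive ingredients---computing $(\partial_t-\Delta_\phi)\mathrm{tr}_{\hat g}g_\phi$ in normal coordinates for $\hat g$, using that $J$ is $\hat\nabla$-parallel and $\hat g$ is Ricci-flat to annihilate the $J$-twisted contributions (the paper does this by exhibiting them as traces of the Obata/Levi-Civita curvature that vanish), bounding the remaining terms by $C+C\,\mathrm{tr}_{g_\phi}\hat g$, absorbing via $-A\phi$ and Lemma~\ref{dedeJ}, and finally converting the bound on $\mathrm{tr}_{g_\phi}\hat g$ at the maximum into one on $\mathrm{tr}_{\hat g}g_\phi$ through the arithmetic--geometric mean and the determinant identity---are identical to what you outline.
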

\begin{proof}
Consider the quantity
\[
Q:=\mathrm{tr}_{\hat g}g_\phi -A\phi\,,
\]
where $A$ is a constant to be chosen later. Assume $ M\times [0,T] $ is contained in the domain of $ \phi $ and let $ (x_0,t_0) $ be a maximum point of $ Q $ on $ M\times [0,T] $. We may assume $ t_0>0 $, otherwise the estimate is obvious. Fix normal coordinates at $ x_0 $ with respect to the hyperk\"ahler metric $\hat g$. Observe that the first derivatives of  $ J $ vanish at $x_0$. Now we compute at $x_0$
\begin{equation}\label{Delta}
\begin{aligned}
\Delta_\phi \mathrm{tr}_{\hat g}g_\phi&=g_\phi^{i\bar j} ((\hat g^{r\bar s})_{,i\bar j}g^\phi_{r\bar s}+\hat g^{r\bar s} g^\phi_{r\bar s, i \bar j})\\
&=-g_\phi^{i\bar j}\hat g^{a\bar s}\hat g^{r\bar b}\hat g_{a\bar b,i\bar j}g^\phi_{r\bar s}+g_\phi^{i\bar j}\hat g^{r\bar s}\left(g_{r\bar s,i\bar j}+\frac{1}{2}\phi_{r\bar s i\bar j} +\frac{1}{2}J_{r,i\bar j}^{\bar a}J_{\bar s}^b\phi_{b\bar a}+\frac{1}{2}J_{r}^{\bar a}J_{\bar s,i\bar j}^b\phi_{b\bar a}\right)\,.
\end{aligned}
\end{equation}
On the other hand
\begin{equation}\label{Delta2}
\begin{aligned}
\partial_t \mathrm{tr}_{\hat g}g_\phi=\, &\hat g^{r\bar s} \partial_t g^\phi_{r\bar s}=\hat g^{r\bar s}\dot \phi_{r\bar s}%=g^{r\bar s} \left( \log \frac{\omega_\phi^{2n}}{\omega^{2n}}-2f \right)_{r\bar s}
=\hat g^{r\bar s} \left( g_\phi^{i\bar j}g^\phi_{i\bar j,r} \right)_{,\bar s}-2\Delta_{\hat g} f\\
=\, &-\hat g^{r\bar s}g_\phi^{i\bar l}g_\phi^{k\bar j}g^\phi_{k\bar l,\bar s}g^\phi_{i\bar j,r}+\hat g^{r\bar s}  g_\phi^{i\bar j}g^\phi_{i\bar j,r\bar s} -2\Delta_{\hat g} f\\
=\, &-\hat g^{r\bar s}g_\phi^{i\bar l}g_\phi^{k\bar j}g^\phi_{k\bar l,\bar s}g^\phi_{i\bar j,r}+\hat g^{r\bar s}  g_\phi^{i\bar j}\left(g_{i\bar j,r\bar s}+\frac{1}{2}\phi_{ i\bar jr\bar s} +\frac{1}{2}J_{i,r\bar s}^{\bar a}J_{\bar j}^b\phi_{b\bar a}+\frac{1}{2}J_{i}^{\bar a}J_{\bar j,r\bar s}^b\phi_{b\bar a}\right) \\
\,&-2\Delta_{\hat g} f\,.
\end{aligned}
\end{equation}

The metric $ \hat g $ is hyperk\"ahler, hence
the corresponding $ (2,0) $-form is closed, which in $ I $-holomorphic coordinates implies $ 0=-(\hat g_{a\bar c}J^{\bar c}_b)_{\bar k}=-\hat g_{a\bar c,\bar k}J^{\bar c}_b-\hat g_{a\bar c}J^{\bar c}_{b,\bar k} $ and derivating again we get $ \hat g_{a\bar c,\bar kl}J^{\bar c}_b+\hat g_{a\bar c,\bar k}J^{\bar c}_{b,l}+\hat g_{a\bar c,l}J^{\bar c}_{b,\bar k}+\hat g_{a\bar c}J^{\bar c}_{b,\bar kl}=0 $, which, at $ x_0 $, rewrites as
\[
\hat g^{a \bar q}\hat g_{a\bar p,l\bar k}=J^b_{\bar p}J^{\bar q}_{b,l\bar k}\,.
\]
This identity is simply expressing the fact that the curvatures of the Obata connection coincides with the one of the Levi-Civita connection, indeed in general the Christoffel symbols of the Obata connection are
\[
\Gamma^k_{ij}=-J^{\bar s}_{i,j}J^k_{\bar s}
\]
(see (2.27) in \cite{Dinew-Sroka}) and its curvature is
\[
R_{{\phantom i}\bar jk\bar l}^{\bar i}=-\partial_{k}\overline{\Gamma^i_{jl}}=\partial_{k}\left( J^{s}_{\bar j,\bar l}J^{\bar i}_{s} \right)=J^{s}_{\bar j,k\bar l}J^{\bar i}_{s}+J^{s}_{\bar j, \bar l}J^{\bar i}_{s,k}=J^{s}_{\bar j,k\bar l}J^{\bar i}_{s}=\left(J^{s}_{\bar j}J^{\bar i}_{s}\right)_{,k\bar l}-J^{s}_{\bar j}J^{\bar i}_{s,k\bar l}=-J^{s}_{\bar j}J^{\bar i}_{s,k\bar l}
\]
at a point where the first derivatives of $J$ vanish. 

%Since the Ricci curvature of a hyperk\"ahler metric vanishes we have
%\begin{equation}\label{Simp}
%\hat g^{r\bar s}\hat g_{i\bar j,r\bar s}=-R_{i\bar j \phantom{s}\bar s}^{\phantom{ij}\bar s}=0=-R^{\bar s}_{\phantom{r}\bar si\bar j }=\hat g^{r\bar s}\hat g_{r\bar s,i\bar j}\,,
%\end{equation}
Moreover taking into account that a hyperk\"ahler metric is Ricci-flat we have
\begin{equation}\label{Simp2}
\begin{aligned}
\hat g^{r\bar s}g_\phi^{i\bar j}\left(J_{i,r\bar s}^{\bar a}J_{\bar j}^b+J_{i}^{\bar a}J_{\bar j,r\bar s}^b\right)&=\hat g^{r\bar s}g_\phi^{p\bar q}J^{i}_{\bar q}J^{\bar j}_p\left(J_{i,r\bar s}^{\bar a}J_{\bar j}^b+J_{i}^{\bar a}J_{\bar j,r\bar s}^b\right)\\
&=-\hat g^{r\bar s}g_\phi^{b\bar q}J^{i}_{\bar q}J_{i,r\bar s}^{\bar a}-\hat g^{r\bar s}g_\phi^{p\bar a}J_p^{\bar j}J_{\bar j,r\bar s}^b\\
&=\hat g^{r\bar s}g_\phi^{b\bar q}R_{\phantom{a}\bar qr\bar s}^{\bar a}+\hat g^{r\bar s}g_\phi^{p\bar a}\overline{R_{\phantom{b}\bar ps\bar r }^{\bar b}}=0\,.
\end{aligned}
\end{equation}
Furthermore, we have
\begin{equation}\label{Simp3}
g^{i\bar j}_\phi R_{i\bar jr\bar s}=0\,,
\end{equation}
because
\[
g_\phi^{i\bar j}R_{i\bar jr\bar s}=-g_\phi^{i\bar j}\hat g_{i\bar k}J^{b}_{\bar j}J^{\bar k}_{b,r\bar s}=-g_\phi^{i\bar j}\hat g_{l\bar a}J^l_{\bar k}J^{\bar a}_iJ^{b}_{\bar j}J^{\bar k}_{b,r\bar s}=-g_\phi^{b\bar a}\hat g_{l\bar a}J^{l}_{\bar k}J^{\bar k}_{b,r\bar s}=-g_\phi^{b\bar a}\overline{R_{a\bar b s\bar r}}=-g_\phi^{b\bar a}R_{b\bar a r\bar s}\,,
\]
%\[
%R_{i\bar jp\bar q}J_b^{\bar q}J_{\bar a}^{p}=-J^k_{\bar q}J^{\bar s}_{k,i\bar j}g_{p\bar s}J_b^{\bar q}J_{\bar a}^{p}=J^{\bar s}_{b,i\bar j}g_{p\bar s}J_{\bar a}^{p}=J^{\bar s}_{b,i\bar j}g_{r\bar l}J^r_{\bar s}J^{\bar l}_pJ_{\bar a}^{p}=-J^{\bar s}_{b,i\bar j}g_{r\bar a}J^r_{\bar s}=J^{\bar s}_{b}g_{r\bar a}J^r_{\bar s,i\bar j}=-\overline{R_{j\bar ia\bar b}}=-R_{i\bar jb\bar a}
%\]
%we obtain
and we obtain
\begin{equation}\label{Simp4}
\begin{aligned}
g_\phi^{i\bar j}\hat g^{r\bar s}\left(J_{r,i\bar j}^{\bar a}J_{\bar s}^b+J_{r}^{\bar a}J_{\bar s,i\bar j}^b\right)&=g_\phi^{i\bar j}\hat g^{p\bar q}J_{\bar q}^rJ_p^{\bar s}\left(J_{r,i\bar j}^{\bar a}J_{\bar s}^b+J_{r}^{\bar a}J_{\bar s,i\bar j}^b\right)\\
&=-g_\phi^{i\bar j}\hat g^{b\bar q}J_{\bar q}^rJ_{r,i\bar j}^{\bar a}-g_\phi^{i\bar j}\hat g^{p\bar a}J_p^{\bar s}J_{\bar s,i\bar j}^b\\
&=g_\phi^{i\bar j}\hat g^{b\bar q}R_{\phantom{a}\bar qi\bar j}^{\bar a}+g_\phi^{i\bar j}\hat g^{p\bar a}\overline{R_{\phantom{b}\bar pj\bar i }^{\bar b}}=0\,.%\\
%&=2g_\phi^{i\bar j}g^{b\bar q}g^{p\bar a}R_{p\bar qi\bar j }=0
%=g_\phi^{i\bar j}g^{b\bar q}g^{p\bar a}R_{p\bar qi\bar j }(\phi_{b\bar a}+J_b^{\bar l}J_{\bar a}^k\phi_{k\bar l})\\
%&=2g_\phi^{i\bar j}g^{b\bar q}g^{p\bar a}R_{p\bar qi\bar j }(g^\phi_{b\bar a}-g_{b\bar a})\\
%&=2g_\phi^{i\bar j}g^{b\bar q}g^{p\bar a}R_{p\bar qi\bar j }g^\phi_{b\bar a}-2g_\phi^{i\bar j}R_{\phantom{q}\bar qi\bar j}^{\bar q}=-2g_\phi^{i\bar j}g^{b\bar q}g^{p\bar a}g_{p\bar q,i\bar j}g^\phi_{b\bar a}\,.
\end{aligned}
\end{equation}

Therefore, \eqref{Delta} and \eqref{Delta2}, with the simplifications \eqref{Simp2}, \eqref{Simp3} and \eqref{Simp4} yield
\[
(\partial_t-\Delta_\phi)\mathrm{tr}_{\hat g}g_\phi=-\hat g^{r\bar s}g_\phi^{i\bar l}g_\phi^{k\bar j}g^\phi_{k\bar l,\bar s}g^\phi_{i\bar j,r}+\hat g^{r \bar s}g_\phi^{i\bar j}g_{i\bar j,r\bar s}-\hat g^{r\bar s}g_\varphi^{i \bar j} g_{r\bar s,i\bar j}
-2\Delta f \leq C + C \mathrm{tr}_{g_\varphi}\hat g\,,
\]
where $C>0$ do not depend on $\varphi$.
At the point $ (x_0,t_0) $ we then have
\[
0\leq \left(\partial_t-\Delta_\phi\right) Q=(\partial_t-\Delta_\phi) \mathrm{tr}_{\hat g}g_\phi-A \dot \phi+A\Delta_\phi \phi\leq C-\dot \phi+2nA+(C-A)\mathrm{tr}_{g_\phi}\hat g \,,
\]
and choosing $A > C$ we get
\[
\mathrm{tr}_{g_\phi}\hat g(x_0,t_0)\leq C
\]
because $ \dot \phi $ is uniformly bounded by Lemma \ref{C0}. This allows us to give a bound on $ \mathrm{tr}_{\hat g}g_\phi(x_0,t_0) $ by using 
\[
\mathrm{tr}_{\hat g}g_\phi(x_0,t_0)\leq \frac{1}{(2n-1)!}(\mathrm{tr}_{g_\phi}g(x_0,t_0))^{2n-1}\frac{\omega_\phi^{2n}(x_0,t_0)}{\hat \omega^{2n}(x_0,t_0)}\leq C\mathrm{e}^{\dot \phi +2f}(x_0,t_0)\leq C\,,
\]
see \cite[Corollary 3.3.5]{SW}, where we also used Lemma \ref{C0} again. Since $ Q(x,t)\leq Q(x_0,t_0) $ the claim follows. 
\end{proof}

\begin{proof}[Proof of Theorem $\ref{main}$]
Let $\varphi(t)$, $t\in [0,T)$, be the maximal solution to \eqref{pMA}.
Assume by contradiction that $T$ is finite.  In view of Lemmas \ref{C0}, \ref{C2} and Remark \ref{Schauder} $	\tilde \phi $ is uniformly bounded in $ C^k $ norm for every $ k $. Hence $ \tilde \phi $ is smooth at the time $ T $, but then short-time existence would imply that the solution exists on $ [0,T+\delta) $ for some $ \delta>0 $, contradicting the maximality of $ T $, hence $ T=\infty $.

The smooth convergence of $ \tilde \phi(t) $ to some $ \phi_\infty\in C^\infty(M) $ can be obtained repeating almost verbatim the argument of Gill \cite[Sections 6 and 7]{Gill}. The technique developed by Gill is inspired by Li and Yau \cite{LY} and is focused on studying the heat-type equation
\begin{equation}\label{heat}
\dot u = \Delta_\phi u\,.
\end{equation}
In \cite{Gill} a Harnack inequality is obtained and applied to $ u= \dot \phi $ in order to show that $ \dot{\tilde \phi} $ decays exponentially. This allows to deduce the convergence of $ \tilde\phi $ to a smooth function. We emphasise that for us the dependence of $ g_\phi $ from the potential $ \phi $ is not the same as in \cite{Gill}, however the argument never requires to express $ g_\phi $ in terms of the potential and the only thing that matters is that $ g_\phi $ is uniformly bounded in $ C^\infty $.

Therefore, since $ u=\dot \phi $ satisfies \eqref{heat} and we have $ C^\infty $ bounds  by Remark \ref{Schauder}, $ \tilde \phi $ converges smoothly to some function $ \phi_\infty\in C^\infty(M) $. Since $ \tilde \phi $ solves the equation
\[
\dot {\tilde
\phi}(t)=2\log \frac{(\Omega+ \partial \partial_J \tilde \phi(t))^n}{\Omega^n}-2f-\frac{2}{\int_M \Omega^n \wedge \bar \Omega^n}\int_M \left( \log \frac{(\Omega+ \partial \partial_J \tilde \phi(t))^n}{\Omega^n}-f \right) \Omega^n \wedge \bar \Omega^n
\]
taking the limit as $ t\to \infty $ yields
\[
(\Omega + \partial \partial_J \phi_\infty)^n= \mathrm{e}^{f+b} \Omega^n
\]
where
\[
b=\frac{1}{\int_M \Omega^n \wedge \bar \Omega^n}\int_M \left( \log \frac{(\Omega+ \partial \partial_J \phi_\infty)^n}{\Omega^n}-f \right) \Omega^n \wedge \bar \Omega^n\,. 
\]

In order to conclude the proof of Theorem \ref{main} we have still to observe that the equation has at most one solution.  Here we can work as follows: let $(\phi,b),(\psi,c)$ be two solutions to \eqref{QMA} with $ b\geq c $. We have that
\[
\frac{\partial \partial_J(\phi-\psi) \wedge \sum_{k=0}^{n-1} \Omega_\phi^k\wedge \Omega_\psi^{n-1-k}}{\Omega^n}= \frac{\Omega_\phi^n-\Omega_\psi^n}{\Omega^n}=(\mathrm{e}^b-\mathrm{e}^c)\mathrm{e}^{f} \geq 0\,.
\]
On the left hand-side we have a second order linear elliptic operator without free term applied to $ \phi-\psi $ and from the maximum principle and the fact that $ \sup_M \phi=\sup_M \psi=0 $ it follows $\phi=\psi $. Hence we have also $ b=c $ and uniqueness follows. 
\end{proof}

\section{The adapted Chern-Ricci flow}\label{sJ-}
In this section we consider flow \eqref{J-} on hypercomplex manifolds. 

\medskip 
Given a compact hyperhermitian manifold $(M,I,J,K,g)$ and a covariant $2$-tensor $S$, we denote by 
$$
S^{-}:=\frac12 (S-JS)
$$
its $J$-anti-invariant part in order to rewrite \eqref{J-} as  
$$
\dot \omega(t) =-{\rm Ric}^- (\omega(t))\,,\quad \omega(0)=\omega\,. 
$$
Analogously to the K\"ahler and the Hermitian case the flow is equivalent to a scalar one
\begin{equation}\label{equa}
\dot \varphi(t)=\log\frac{(\omega-t {\rm Ric}^-(\omega)+ (i\partial\bar\partial \varphi(t))^-)^{2n}}{\omega^{2n}}\,,\quad \varphi(0)=0\,. 
\end{equation}
Indeed, if $\varphi$ solves \eqref{equa}, then  $\omega(t):=\omega-t {\rm Ric}^-(\omega)+ (i\partial\bar\partial \varphi(t))^-$ solves \eqref{J-} since  
$$
\dot \omega(t)=-{\rm Ric}^- (\omega) + (i\partial\bar\partial \dot\varphi(t))^- = -{\rm Ric}^- (\omega) + \left(i\partial\bar\partial \log\frac{\omega(t)^{2n}}{\omega^{2n}}\right)^- = -{\rm Ric}^- (\omega(t))\,. 
$$
Conversely if $\omega(t)$ solves \eqref{J-}, then we define
$$
\varphi(t):=\int_0^t \log \frac{\omega(s)^{2n}}{\omega^{2n}}\, ds
$$
and we have 
$$
\begin{aligned}
\partial_t(\omega(t)  -\omega+t {\rm Ric}^-(\omega)- (i\partial\bar\partial \varphi(t))^-)&=-{\rm Ric}^- (\omega(t))+{\rm Ric}^-(\omega)-(i\partial\bar\partial \dot\varphi(t))^-\\
& =-{\rm Ric}^- (\omega(t))+{\rm Ric}^-(\omega)-\left(i\partial\bar\partial \log \frac{\omega(t)^{2n}}{\omega^{2n}}\right)^-=0\\
\end{aligned}
$$
which implies $\omega(t) = \omega-t {\rm Ric}^-(\omega) +  (i\partial\bar\partial \varphi(t))^-$ for all $t$.

\medskip
\noindent According to the K\"ahler \cite{TZ} and the Hermitian case \cite{TW},  it is quite natural to introduce the following conjecture 

\medskip 
\begin{conj}\label{conj}
There exists a unique maximal solution to the flow \eqref{J-} on 
$[0,T)$, where 
$$
T=\sup \,\{t\geq 0\,:\,\mbox{ there exists }\psi \in C^{\infty}(M) \mbox{ s.t. }\omega-t{\rm Ric}(\omega)^{-}+i(\partial\bar\partial\psi)^{-}>0 \}\,.
$$
\end{conj}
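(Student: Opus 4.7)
The plan is to follow the Tosatti--Weinkove strategy for the Chern--Ricci flow \cite{TW}, replacing $i\partial\bar\partial$ by its $J$-anti-invariant part throughout. First I would rewrite \eqref{J-} as the scalar parabolic equation \eqref{equa}, using that $(i\partial\bar\partial\varphi)^-=\omega_\varphi-\omega$ by Lemma \ref{dedeJ}. The linearization of the right-hand side of \eqref{equa} at $\varphi$ in the direction $\psi$ equals $\mathrm{tr}_{\omega_\varphi}(i\partial\bar\partial\psi)^-=\Delta_\varphi\psi$, so the equation is strictly parabolic whenever $\omega(t)>0$. Standard quasilinear parabolic theory (or Hamilton's inverse function theorem) then yields a unique maximal smooth solution $\varphi(t)$ on some interval $[0,T^*)$, with $\omega(t)$ positive there. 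The inequality $T^*\le T$ is immediate: for any $t\in[0,T^*)$, $\varphi(t)$ itself is an admissible test function in the definition of $T$.

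To show the reverse inequality $T^*\ge T$, I would argue by contradiction: suppose $T^*<T$, fix $T^*<T_1<T$ and $\psi\in C^\infty(M)$ with $\omega-T_1\,{\rm Ric}^-(\omega)+(i\partial\bar\partial\psi)^->0$, and define the reference family
\[
\tilde\omega_t:=(1-t/T_1)\,\omega+(t/T_1)\bigl[\omega-T_1\,{\rm Ric}^-(\omega)+(i\partial\bar\partial\psi)^-\bigr],\qquad t\in[0,T_1],
\]
which is positive by convexity. Setting $\hat\varphi(t)=\varphi(t)-(t/T_1)\psi$, the flow rewrites as
\[
\dot{\hat\varphi}=\log\frac{(\tilde\omega_t+(i\partial\bar\partial\hat\varphi)^-)^{2n}}{\omega^{2n}}-\frac{\psi}{T_1}.
\]
The $C^0$ bounds for $\hat\varphi$ and $\dot{\hat\varphi}$ on $[0,T^*)$ follow by applying the parabolic maximum principle to $\hat\varphi$ and to the $t$-derivative of the equation, in the same fashion as in Lemma \ref{C0}. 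Once a uniform upper bound $\mathrm{tr}_{\tilde\omega_t}\omega(t)\le C$ on $[0,T^*)$ is established, Theorem \ref{C2alpha} together with Remark \ref{Schauder} produces uniform $C^k$-bounds for every $k$, so $\varphi(t)$ extends smoothly past $T^*$, contradicting maximality and forcing $T^*\ge T$.

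The main obstacle is the second-order estimate. In Lemma \ref{C2} the background hyperk\"ahler metric $\hat g$ kills all curvature contributions and makes the first derivatives of $J$ vanish at the maximum point, yielding the crucial simplifications \eqref{Simp2}--\eqref{Simp4}; here the reference family $\tilde\omega_t$ has nontrivial torsion, a nonzero time derivative $\partial_t\tilde\omega_t=-{\rm Ric}^-(\omega)+(i\partial\bar\partial\psi)^-/T_1$, and is not compatible with any background hyperk\"ahler structure. One would still apply the parabolic maximum principle to a quantity of the form $\log\mathrm{tr}_{\tilde\omega_t}\omega(t)-A\hat\varphi$ for $A$ large, but the third-order terms coming from the derivatives of $J$ appearing inside $(i\partial\bar\partial\hat\varphi)^-$, and the torsion terms of $\tilde\omega_t$, must be absorbed using Schwarz-type cancellations rather than by the vanishing identities used in Lemma \ref{C2}. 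Carrying this out in full generality—very likely under the extra hypothesis that $\omega$ be HKT, so that Lemma \ref{lemma3} furnishes enough structure on the reference form—is the real technical challenge, and mirrors the difficulty overcome by Tosatti--Weinkove in the Hermitian Chern--Ricci setting.
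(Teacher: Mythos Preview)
Your assessment is essentially on target, but note that the statement you are attempting to prove is a \emph{conjecture}: the paper does not claim a proof in general. What the paper does is exactly what you outline and then stop where you stop. It proves the $C^0$ bounds on $\varphi$ and $\dot\varphi$ (Lemmas \ref{genC0} and \ref{genC0t}, formulated abstractly for flows of the form $\dot\varphi=F_t(\varphi)$ rather than via your Tosatti--Weinkove reference family $\tilde\omega_t$, but the content is the same maximum-principle argument), and it then proves the conjecture \emph{conditionally} in Proposition \ref{time}, assuming the uniform metric equivalence $C^{-1}\omega\le\omega(t)\le C\omega$ as an additional hypothesis. The paper explicitly says that the second-order estimate ``we leave open''.

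So there is no genuine gap in your proposal beyond the one you already flag: the second-order estimate for $\mathrm{tr}_{\tilde\omega_t}\omega(t)$ in the absence of a background hyperk\"ahler metric. Your diagnosis of why Lemma \ref{C2} does not transfer---the vanishing identities \eqref{Simp2}--\eqref{Simp4} rely on the hyperk\"ahler reference, and here one would need to absorb third-order and torsion terms by Schwarz-type inequalities instead---is accurate and matches the paper's own view. The only organizational difference is that the paper packages the zeroth-order estimates as general lemmas about abstract parabolic operators, while you work with the concrete shifted potential $\hat\varphi$; both routes give the same bounds.
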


Note that, if $c_1^{\rm BC}(M,I)=0$, then flow \eqref{J-} is equivalent to  the parabolic quaternionic Monge-Amp\`ere equation \eqref{pMA}. In particular Theorem \ref{main}  implies that if $(M,I,J,K)$ has an underlying hyperk\"ahler metric, then \eqref{J-} has a long-time solution and the conjecture in this special case is verified. In order to prove the conjecture in the general case we need a priori estimates on the solution $\varphi$ to \eqref{equa}. We can observe that  estimates can be obtained working as in the complex case \cite[Lemma 4.1]{TW} except for the estimate for the second order derivatives which we leave open.
%Indeed, we have the following 
% 
%\begin{prop}
%Let $\varphi$ be a solution to \eqref{equa}, then $\|\varphi\|_{C^0}$ and $\|\dot \varphi\|_{C^0}$ are uniformly bounded. 
%\end{prop} 
\begin{prop}\label{time}
Let $ (M,I,J,K,g) $ be a compact hyperhermitian manifold and $ \omega(t) $ a solution of \eqref{J-}. Assume that there exists a uniform positive constant $ C $ such that
\[
C^{-1}\omega \leq \omega(t) \leq C \omega\,,
\]
then Conjecture \ref{conj} holds.
\end{prop}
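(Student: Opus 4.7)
The plan is to prove the equality $T_f = T_c$, where $T_f$ denotes the maximal existence time of the solution $\omega(t)$ to \eqref{J-} and $T_c$ the supremum defining $T$ in Conjecture \ref{conj}. The inequality $T_f \le T_c$ is immediate: for every $t \in [0, T_f)$ the form $\omega(t) = \omega - t\,\mathrm{Ric}^-(\omega) + (i\partial\bar\partial \varphi(t))^-$ is positive, so $\psi = \varphi(t)$ witnesses the inclusion of $t$ in the set defining $T_c$. Uniqueness of the maximal solution is a direct consequence of the strict parabolicity of \eqref{J-}, already observed via the scalar reformulation \eqref{equa} at the beginning of Section \ref{sJ-}. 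The proposition therefore reduces to the reverse inequality $T_f \ge T_c$, which is where the metric equivalence $C^{-1}\omega \le \omega(t) \le C\omega$ enters.

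To prove $T_f \ge T_c$, I would argue by contradiction and suppose $T_f < T_c$; in particular $T_f < \infty$. The first step is to derive uniform $C^k$ bounds for $\varphi$ on $[0, T_f)$ for every $k$. From the scalar formulation \eqref{equa} one reads $\dot\varphi(t) = \log(\omega(t)^{2n}/\omega^{2n})$, so the assumed metric bound immediately gives $\|\dot\varphi\|_{C^0} \le 2n\log C$. Integrating from $\varphi(0) = 0$ and using $T_f < \infty$ then produces a uniform $C^0$ bound on $\varphi$, while the upper inequality $\omega(t) \le C\omega$ yields $\mathrm{tr}_g g_{\omega(t)} \le 2nC$. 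At this point I would invoke Chu's parabolic theorem exactly as in the proof of Theorem \ref{C2alpha}: the only structural difference in our setting is that the background form in \eqref{equa} is the smooth, time-dependent $\omega - t\,\mathrm{Ric}^-(\omega)$ rather than the fixed $\omega$ of \eqref{pMA}, which amounts to a uniformly bounded smooth perturbation of the function $\mathcal{S}$ of the proof of Theorem \ref{C2alpha}. This does not affect the ellipticity and concavity conditions $\mathbf{H1}$--$\mathbf{H3}$ of Chu's theorem, whose constants remain uniform on the finite interval $[0,T_f]$, so the theorem still produces a uniform $C^{2,\alpha}$ bound on $\varphi$. The Schauder bootstrap recorded in Remark \ref{Schauder} then promotes this to uniform $C^k$ bounds for every $k$.

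Given these smooth bounds, $\varphi(t)$ converges in $C^\infty(M)$ as $t \to T_f^-$ to some $\varphi(T_f) \in C^\infty(M)$, and the lower bound $\omega(t) \ge C^{-1}\omega$ passes to the limit, so $\omega(T_f) > 0$. The flow \eqref{J-} is therefore strictly parabolic at $t = T_f$, and short-time existence provides a smooth extension to some interval $[0, T_f + \delta)$, contradicting the maximality of $T_f$. This forces $T_f \ge T_c$ and concludes the proof.

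The main obstacle I anticipate is the careful verification that the parabolic theorem of Chu, used as a black box in the proof of Theorem \ref{C2alpha}, goes through with the moving reference form $\omega - t\,\mathrm{Ric}^-(\omega)$ in place of the fixed $\omega$. The key observation that makes the argument work is that this shift is a smooth, linear-in-$t$ perturbation of $\omega$ whose $C^k$ norms are controlled uniformly on $[0, T_f]$ because $T_f < \infty$; consequently the constants $C_0$ defining the compact convex set $\mathcal{E}$ in the proof of Theorem \ref{C2alpha} can be chosen independently of $t$, and the hypotheses of Chu's theorem remain verified throughout the flow.
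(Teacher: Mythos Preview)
Your proposal is correct and follows essentially the same route as the paper: assume by contradiction that the maximal existence time is strictly less than $T$, derive uniform $C^k$ estimates on $\varphi$ via Chu's parabolic theorem as in Theorem~\ref{C2alpha} (with the time-dependent background $\omega - t\,\mathrm{Ric}^-(\omega)$ absorbed into $\mathcal{S}$), and extend by short-time existence. The only difference is cosmetic: the paper obtains the $C^0$ bounds on $\varphi$ and $\dot\varphi$ through the general-purpose Lemmas~\ref{genC0} and~\ref{genC0t}, whereas you read them off directly from the assumed metric equivalence $C^{-1}\omega\le\omega(t)\le C\omega$, which is a slight shortcut but not a different strategy.
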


Flow \eqref{equa} fits in the following quite general class of parabolic problems: \\ 
let $(M,g)$ be a compact Riemannian manifold and 
$$
F_{t}\colon C^{2}_{+}(M)\to C^{0}(M)\,,\quad t\in [0,T)\,,
$$
a smooth family of  second-order partial differential operators defined on an open subset $C^{2}_{+}(M)$ of $C^{2}(M)$. Assume that $-F_{t}\colon C^{2}_{+}(M)\to C^{0}(M)$ is strongly elliptic for every $t\in [0,T)$. Assume further that 
$$
F_t(\psi+C)= F_t(\psi)
$$ 
for every $\psi\in C^{2}_+(M)$, $t\in [0,T)$ and constant $C$. Then we consider the parabolic flow  
\begin{equation}\label{general}
\dot \varphi(t)=F_t(\varphi(t))\,,\quad \varphi(0)=0\,.
\end{equation}

\begin{lem}\label{genC0}
Assume that there exists a continuous map $\Lambda\colon M\times [0,T)\to \R$ such that for any $0<T'< T$ and $\psi\in C^{2}(M\times [0,T'])$ 
$$
F_{t_0}(\psi({t_0}))(x_0)\leq \Lambda(x_0,t_0) \mbox{ if $(x_0,t_0)$ is a maximum point of } \psi\,,
$$
then solutions to \eqref{general} satisfy a uniform upper bound. Analogously, if there exists a continuous map $\lambda\colon M\times [0,T)\to \R$ such that for any $0<T'< T$ and $\psi\in C^{2}(M\times [0,T'])$
$$
F_{t_0}(\psi({t_0}))(x_0)\geq \lambda(x_0,t_0) \mbox{ if $(x_0,t_0)$ is a minumum point of } \psi\,,
$$
then solutions to \eqref{general} satisfy a uniform lower bound.
\end{lem}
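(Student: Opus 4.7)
The plan is to reduce both assertions to a standard parabolic maximum principle, exploiting crucially the invariance $F_t(\psi+C)=F_t(\psi)$ under addition of spatial constants. I focus on the upper bound; the lower bound is obtained by sign reversal.

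Fix $T'\in(0,T)$. Since $M$ is compact and $\Lambda$ is continuous, the function
\[
\Lambda^*(t):=\max_{x\in M}\Lambda(x,t)
\]
is continuous on $[0,T']$, hence bounded. For any $\epsilon>0$ I would consider the auxiliary function
\[
\psi(x,t):=\varphi(x,t)-\int_0^t \Lambda^*(s)\,\mathrm{d}s-\epsilon\, t
\]
on $M\times[0,T']$, and claim its spacetime maximum is attained at $t=0$. The key observation is that at any fixed time $t$, $\psi(\cdot,t)$ differs from $\varphi(\cdot,t)$ by a quantity constant in $x$, so the invariance property gives
\[
F_t(\psi(t))=F_t(\varphi(t))=\dot\varphi(t)
\]
pointwise on $M$, for every $t\in[0,T']$.

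Suppose now the maximum of $\psi$ on $M\times[0,T']$ is attained at some $(x_0,t_0)$ with $t_0>0$. Then $\partial_t\psi(x_0,t_0)\geq 0$, i.e.\ $\dot\varphi(x_0,t_0)\geq \Lambda^*(t_0)+\epsilon$, so by the identity above $F_{t_0}(\psi(t_0))(x_0)\geq \Lambda^*(t_0)+\epsilon$. On the other hand $(x_0,t_0)$ is a maximum point of $\psi$, so by hypothesis
\[
F_{t_0}(\psi(t_0))(x_0)\leq \Lambda(x_0,t_0)\leq \Lambda^*(t_0),
\]
a contradiction. Hence the max must occur at $t_0=0$, where $\psi(\cdot,0)=\varphi(\cdot,0)=0$. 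This yields
\[
\varphi(x,t)\leq \int_0^t \Lambda^*(s)\,\mathrm{d}s+\epsilon\, t
\]
on $M\times[0,T']$, and letting $\epsilon\to 0$ gives the desired uniform upper bound on any compact subinterval of $[0,T)$, depending only on $\Lambda$.

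For the lower bound, the analogous choice $\psi(x,t):=\varphi(x,t)-\int_0^t\lambda_*(s)\,\mathrm{d}s+\epsilon\,t$, with $\lambda_*(t):=\min_M\lambda(\cdot,t)$, and inspection at a spacetime minimum, produces by the symmetric computation $\varphi(x,t)\geq \int_0^t\lambda_*(s)\,\mathrm{d}s-\epsilon\,t$. The only conceptual point that needs care — and is really the heart of the argument — is the observation that the shift $\psi-\varphi$ is spatially constant at each time, so that the invariance hypothesis transfers the estimate for $F_t$ at a maximum of $\psi$ directly into information on $\dot\varphi$; no genuine analytic difficulty intervenes.
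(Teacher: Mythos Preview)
Your proof is correct and follows essentially the same approach as the paper's: subtract from $\varphi$ a function that depends only on $t$ (so that the invariance $F_t(\psi+C)=F_t(\psi)$ applies), then use the maximum principle to force the spacetime maximum to occur at $t=0$. The only cosmetic difference is that the paper subtracts the linear function $At$ with a constant $A>\max_{M\times[0,T']}\Lambda$, whereas you subtract the slightly sharper $\int_0^t\Lambda^*(s)\,\mathrm{d}s+\epsilon t$ and then let $\epsilon\to 0$; the underlying idea and the use of the invariance hypothesis are identical.
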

\begin{proof}
Let $\phi\in C^2(M\times [0,T'))$ be a solution of \eqref{general} with $T'<T$. Fix $0<T''<T'$ and consider $\psi(t):=\varphi(t)-At$, where $A>{\rm max}_{M\times [0,T']} \Lambda$ is a positive constant. Let $(x_0,t_0)$ be a maximum point of $\psi$ in $M\times [0,T'']$. Since 
$$
\partial_t\psi(x_0,t_0)=F_{t_0}(\varphi(t_0))(x_0)-A=F_{t_0}(\psi(t_0))(x_0)-A\leq \Lambda(x_0,t_0)-A<0\,,
$$
then $t_0=0$ and, since $\psi_0\equiv 0$, 
$$
\varphi(x,t)\leq t A\leq T'A 
$$
for every $(x,t)\in M\times[0,T'']$. Since $T''$ is arbitrary the upper bound on $\varphi$ follows.  

In a similar way, considering $\eta(t)=\phi(t)+Bt$, where $ B>-\min_{M\times [0,T']}\lambda $, at a minimum point of $\eta$ in $ M\times [0,T''] $ we achieve a lower bound.
\end{proof}

%for every $\psi\in C^{2}_+(M)$, $t\in [0,T)$ and constant $C$. Then we consider the parabolic flow  
%\begin{equation}\label{general}
%\partial_t\varphi=F_t(\varphi)\,,\quad \varphi_{0}=0\,.
%\end{equation}
%\begin{lem}
%Assume that there exists a continuous map $\Lambda\colon M\times [0,T)\to \R$ such that for any $0<T'< T$ and $\psi\in C^{2}(M\times [0,T'])$
%$$
%F_{t_0}(\psi)(x_0,t_0)\leq \Lambda(x_0,t_0) \mbox{ if $(x_0,t_0)$ is a maximum point of } \psi\,,
%$$
%then solutions to \eqref{general} satisfies a uniform upper bound. Analogously, if there exists a continuous map $\lambda\colon M\times [0,T)\to \R$ such that for any $0<T'< T$ and $\psi\in C^{2}(M\times [0,T'])$
%$$
%F_{t_0}(\psi)(x_0)\geq \lambda(x_0,t_0) \mbox{ if $(x_0,t_0)$ is a minumum point of } \psi\,,
%$$
%then solutions to \eqref{general} satisfy a uniform lower bound.
%\end{lem}
%\begin{proof}
%We prove the first part of the statement, the second part is analogue. Let $\varphi\in C^{2}(M\times [0,T'))$ be a solution to \eqref{general} and $0<T''<T'$. Consider $\psi_t=\varphi_t-At$, where $A>{\rm max}_{M\times [0,T']} \Lambda$ is a positive constant . Let $(x_0,t_0)$ be a maximum point of $\psi$ in $M\times [0,T'']$. Since 
%$$
%\partial_t\psi_{t_0}(x_0)=F_{t_0}(\varphi_{t_0})(x_0)-A=F_{t_0}(\psi_{t_0})(x_0)-A\leq \Lambda(x_0,t_0)-A<0\,,
%$$
%then $t_0=0$ and, since $\psi_0\equiv 0$, 
%$$
%\varphi_t(x)\leq t A\leq TA 
%$$
%for every $(x,t)\in M\times[0,T'']$. Since $T''$ is arbitrary the upper bound on $\varphi$ follows.  
%\end{proof}

\begin{lem}\label{genC0t}
Let $\varphi\in C^{2}(M\times [0,T'))$ be a solution to \eqref{general} uniformly bounded from above, where $0<T'<T$. Assume that there exists a continuous function $g\colon [0,T]\to \R$ such that for every $ T'<T''<T $ we have
$$
(T''-t)(\partial_tF_t)(\varphi(t))+g(t)-(F_{t})_{*|\varphi(t)}\varphi(t)>0 \mbox{ for all }t\in [0,T')\,,
$$
then the first time derivative of $\varphi$ satisfies a uniform lower bound. Analogously, if $\varphi\in C^{2}(M\times [0,T'))$ is a solution to \eqref{general} uniformly bounded from below and there exists a continuous function $ h\colon [0,T] \to \R $ such that
\[
t(\partial_tF_{t})(\varphi(t))-h(t)+(F_t)_{*\vert \phi(t)} \phi(t)\leq 0 \mbox{ for all }t\in [0,T')\,,
\]
then the first time derivative of $\varphi$ satisfies a uniform lower bound. 
\end{lem}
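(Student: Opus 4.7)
The plan, parallel to that of Lemma \ref{genC0}, is to construct for each assertion a test function combining $\dot\varphi$, $\varphi$ and a purely temporal correction, and to invoke the parabolic min/max principle for the operator $\partial_t - L$ with $L:=(F_t)_{*|\varphi(t)}$. Two identities drive the computation: differentiating \eqref{general} in $t$ yields $(\partial_t-L)\dot\varphi=(\partial_tF_t)(\varphi)$, and the invariance $F_t(\psi+c)=F_t(\psi)$ implies $L\mathbf{1}=0$, so $L$ annihilates purely time-dependent functions. In both cases the weights of $\dot\varphi$ and $\varphi$ are tuned so that the extra $\dot\varphi$ arising from the product rule cancels exactly against $\partial_t\varphi=\dot\varphi$.

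For the first assertion, fix $T''\in(T',T)$ from the hypothesis and set $G(t):=\int_0^t g(s)\,ds$. Consider
\[
\Psi(x,t):=(T''-t)\dot\varphi(x,t)+\varphi(x,t)+G(t).
\]
A short calculation gives $(\partial_t-L)\Psi=(T''-t)(\partial_tF_t)(\varphi)+g(t)-L\varphi>0$ by hypothesis. The minimum principle then forces the minimum of $\Psi$ on $M\times[0,T']$ to be attained at $t=0$, where $\Psi(\cdot,0)=T''F_0(0)$ is uniformly bounded. Combined with the upper bound on $\varphi$, this yields $\dot\varphi\ge -C/(T''-T')$.

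For the second assertion the parallel construction is
\[
\widetilde\Psi(x,t):=t\dot\varphi(x,t)-\varphi(x,t)-\int_0^t h(s)\,ds-\epsilon t,
\]
with small $\epsilon>0$ to be sent to $0$. The same algebra yields $(\partial_t-L)\widetilde\Psi=t(\partial_tF_t)(\varphi)-h(t)-\epsilon+L\varphi\le -\epsilon<0$, so the maximum principle forces the maximum of $\widetilde\Psi$ at $t=0$, where $\widetilde\Psi(\cdot,0)=0$, producing $t\dot\varphi\le\varphi+\int_0^t h(s)\,ds$ in the limit $\epsilon\to 0^+$.

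The main obstacle is extracting from this inequality the \emph{lower} bound on $\dot\varphi$ asserted in the statement. The estimate $t\dot\varphi\le\varphi+\int_0^t h(s)\,ds$ controls $\dot\varphi$ from above, and only when $\varphi$ is bounded above, whereas the hypothesis of the lemma imposes a lower bound on $\varphi$. I see no way, within this test-function framework, to extract a lower bound on $\dot\varphi$ from the stated hypothesis alone; an additional ingredient such as a pointwise lower bound on $(\partial_tF_t)(\varphi)$ would suffice, via the maximum principle applied directly to $-\dot\varphi$ using $(\partial_t-L)(-\dot\varphi)=-(\partial_tF_t)(\varphi)$. I therefore read the stated conclusion as a transcription slip for the natural dual of the first assertion: under the stated hypothesis on $h$ together with $\varphi\le C$, one has $\dot\varphi\le C'$ on $[\epsilon_0,T']$ for every fixed $\epsilon_0>0$, which is precisely what the argument above delivers.
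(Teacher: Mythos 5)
Your proof is correct and follows essentially the same route as the paper: the paper uses exactly your test functions, $Q=(T''-t)\dot\varphi+\varphi+G$ for the first assertion and $S=t\dot\varphi-\varphi-H$ for the second, together with the same parabolic maximum/minimum principle computation (the paper's displayed identity for $(F_t)_{*|\varphi}Q$ contains a typo, $\dot\varphi$ in place of $\varphi$, but the intended computation is yours). Your diagnosis of the second assertion is also right: the paper's own quantity $S$, handled ``analogously'', delivers $t\dot\varphi\le\varphi+H(t)$, i.e.\ an \emph{upper} bound on $\dot\varphi$ away from $t=0$ under an upper bound on $\varphi$ --- which is the Tian--Zhang-type estimate actually needed in the proof of Proposition \ref{time} with $h=2n$ --- so the statement's ``bounded from below''/``uniform lower bound'' in the second assertion is indeed a transcription slip, not a gap in your argument.
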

\begin{proof}
Let $G$ be a primitive function of $g$ such that $G(0)=0$. Fix $ T'<T''<T $ and define 
$$
Q(t)=(T''-t)\dot \varphi(t) +\varphi(t)+ G(t)
$$
Then 
$$
\partial_tQ(t)=(T''-t)\ddot{\varphi}(t)+g(t)
$$
and
$$
(F_{t})_{*|\varphi(t)}Q(t)=(T''-t)(F_{t})_{*|\varphi(t)}\dot{\varphi}(t)+(F_{t})_{*|\varphi(t)}\dot\varphi(t)
$$
Since $\dot \varphi(t)=F_t(\varphi(t))$, we have $\ddot \varphi(t)=(F_{t})_{*|\varphi(t)}\dot{\varphi}(t)+(\partial_tF_{t})(\varphi(t))$.
Therefore using our assumptions
$$
\partial_tQ(t)-(F_t)_{*|\varphi(t)}Q(t)=(T''-t)(\partial_tF_{t})(\varphi(t))+g(t)-(F_{t})_{|*\varphi(t)}\dot\varphi(t)>0
$$
and by the maximum principle 
$$
(T''-t)\dot \varphi(t)+\varphi(t)+G(t)=Q(t)\geq \inf_{M} Q(0)=T''\inf_{M}\,\dot \varphi(0)\,,
$$
for any $t\in [0,T')$, i.e.,  
$$
(T''-t)\dot \varphi(t)\geq T''\inf_{M}\,\dot \varphi(0)-\varphi(t)-G(t)\,. 
$$
Since $\varphi$ is uniformly bounded from above, then we have 
$$
(T''-t)\dot \varphi(t)\geq -C\ 
$$
for a uniform positive constant $C$. Hence 
$$
\dot \varphi(t)\geq -\frac{C}{T''-T'}
$$
and the claim follows.   

For the lower bound of $ \dot \phi $ we consider the quantity
\[
S(t)=t\dot \phi(t)-\phi(t)-H(t)
\]
where $ H $ is a primitive of $ h $ such that $ H(0)=0 $ and proceed analogously.
%then
%\[
%\partial_t S(t)=t\ddot \phi(t)-h(t)
%\]
%\[
%(F_t)_{*\vert \phi(t)}S(t)=t(F_t)_{*\vert \phi(t)} \dot \phi (t)- (F_t)_{*\vert \phi(t)} \phi(t)
%\]
%\[
%\partial_t S(t)-(F_t)_{*\vert \phi(t)}S(t)=t(F_{t})_{*|\varphi(t)}\dot{\varphi}(t)+t(\partial_tF_{t})(\varphi_t)-h(t)-t(F_t)_{*\vert \phi(t)} \dot \phi (t)+ (F_t)_{*\vert \phi(t)} \phi(t)
%\]
%\[
%=t(\partial_tF_{t})(\varphi_t)-h(t)+(F_t)_{*\vert \phi(t)} \phi(t)\leq 0
%\]
\end{proof}

%\[
%0\leq \partial_t (\phi(t)-At)=\log \frac{ (\omega- t \Ric^-(\omega)+i(\partial \bar \partial \phi)^-)^{2n}}{\omega^{2n}}-A \leq \log \frac{ (\omega- t \Ric^-(\omega))^{2n}}{\omega^{2n}}-A
%\]
%\[
%H\leq 0
%\]
%
%\[
%\langle (H+J^tHJ) x,x \rangle =\langle Hx,x \rangle+ \langle HJ x, Jx \rangle \leq 0
%\]

\begin{proof}[Proof of Proposition \ref{time}]
Uniqueness of solutions to \eqref{J-} follows from the same property of the equivalent flow \eqref{equa}. To prove that the solution $\omega(t)$ to \eqref{J-} exists on $[0,T)$ it is enough to focus on the parabolic Monge-Ampère-type equation \eqref{equa}. By standard parabolic theory there is a unique solution $\phi(t)$ to \eqref{equa} on a maximal time interval $ [0,T') $. Assume by contradiction $T'<T$. Taking
\[
\Lambda=\lambda=\log \frac{ (\omega- t \Ric^-(\omega))^{2n}}{\omega^{2n}}
\]
Lemma \ref{genC0} applies to flow \eqref{equa}. Furthermore, taking $ g(t)=h(t)=2n $ Lemma \ref{genC0t} also applies. Combining these with the assumption $C^{-1}\omega \leq \omega(t)\leq C\omega$ and working in the same fashion as in Theorem \ref{C2alpha} and applying \cite[Theorem 5.1]{Chu} we obtain uniform estimates on $\phi(t)$ of any order on $ [0,T') $. In particular $ \lim_{t\to T'} \phi(x,t)$ is still smooth and short time-existence gives a contradiction, since we would then be able to extend the flow on $ [0,T'+\epsilon) $ for some $\epsilon>0$. 
\end{proof}

%\begin{cor}
%Let $ (M,I,J,K,g) $ be a compact hyperhermitian manifold. If there exists a hyperk\"ahler metric on $ M $ compatible with $ (I,J,K) $, then the conjecture holds.
%\end{cor}
%\begin{proof}
%The existence of a hyperk\"ahler metric compatible with the hypercomplex structure implies that $c_1^{\rm BC}(M,I)=0$ and thus, flow \eqref{J-} is equivalent to the parabolic quaternionic Monge-Ampère equation
%\begin{equation}
%\dot \phi= 2\log \frac{(\Omega+\partial\partial_J\varphi)^n}{\Omega^n}-2f\,,\qquad \Omega+\partial\partial_J\varphi>0\,, \qquad \phi(\cdot,0)=0\,,
%\end{equation}
%where $2f$ is a Chern-Ricci potential for $\omega$. The second-order estimate now follows from Lemma \ref{C2}.
%\end{proof}

\section{Remarks and Further Developments}

From the geometric point of view the conjecture of Alesker and Verbitsky consists in prescribing the $J$-anti-invariant part of the Chern-Ricci tensor of an HKT metric in a fixed quaternionic Bott-Chern cohomology class. The existence of Chern-Ricci flat HKT metrics on a hypercomplex manifold implies that the canonical bundle is holomorphically trivial. When the canonical bundle is not holomorphically trivial, it is quite natural to study the existence of hyperhermitian metrics $\omega$ satisfying the Einsten-type condition 
\begin{equation}\label{einstein}
{\rm Ric}-J{\rm Ric}=\lambda\omega\,,\mbox{ for some constant }\lambda\,,  
\end{equation}
or, more generally, of hyperhermitian metrics with constant Chern-scalar curvature (this research project has been suggested to the second named author by Misha Verbitsky). Equation \eqref{einstein}
can be rewritten in terms of $\Omega$ as 
\begin{equation}\label{Einstein}
\partial_J\theta^{1,0}=\lambda \Omega\,,
\end{equation} 
where $\theta$ is the Lee form.   
In the compact case \eqref{Einstein} forces $\lambda$ to be non-negative and on Joyce homogeneous examples \cite{Joyce} (which are the simplest examples of compact HKT manifolds where the canonical bundle is not holomorphically trivial) the canonical metric satisfies  \eqref{Einstein} \cite{G}. Since $\lambda$ is non-negative, in analogy with the K\"ahler case, we expect that it is possible to find obstructions to the existence of HKT metrics satisfying \eqref{Einstein}. 

\medskip 
In order to study the existence of HKT metrics having constant Chern scalar curvature it is quite natural to consider the following analogue of the Calabi-flow on HKT manifolds 
\begin{equation}\label{CF}
\dot \phi(t)=s_\phi(t)\,,\quad \varphi(0)=0\,,
\end{equation}
where $s_\phi(t)$ is the Chern-scalar curvature of $\Omega_\phi:=\Omega + \partial\partial_J\varphi(t)$ and $\Omega$ is a fixed HKT metric. 
%On ${\rm SL}(n,\mathbb H)$ manifolds 
When the canonical bundle is holomorphically trivial \eqref{CF} is the gradient flow of the following Mabuchi-type functional 
$$
\mathcal M(\varphi)= \int_M\log \frac{\Omega_\varphi^n}{\Omega^n}\,\Omega_{\varphi}^n\wedge \bar \Theta-\int_{M} h\,\Omega_{\varphi}^n\wedge\bar\Theta\,,
$$
where $h$ is a $\partial\partial_J$-potential of $\partial_J\theta^{1,0}$ and $\Theta$ is a holomorphic volume form.  
%{\color{red} We think that Lemma \ref{stima} in the present paper could help in establishing regularity and stability of \eqref{CF}. In particular we believe that \cite[Theorem 1.1]{Chen-Cheng} and \cite[Theorem 1.8]{Chen-Cheng} can be adapted to our setting.}

\end{document}